\newcommand\bX{{\mathbf X}}
\newcommand\bc{{\mathbf c}}
\newcommand\boo{\bar\oo}
\numberwithin{equation}{section}
\newcommand{\lbl}[1]{\label{#1}}
\newtheorem{theo}{Theorem}[section]
\newtheorem{lem}{Lemma}[section]
\newtheorem{remark}{Remark}[section]
\newtheorem{coro}{Corollary}[section]
\newtheorem{defi}{Definition}[section]
\newcommand{\be}{\begin{equation}}
\newcommand{\ee}{\end{equation}}
\newcommand\bes{\begin{eqnarray}} \newcommand\ees{\end{eqnarray}}
\newcommand{\bess}{\begin{eqnarray*}}
\newcommand{\eess}{\end{eqnarray*}}
\newcommand\kk{\left}
\newcommand\rr{\right}
\newcommand\dd{\displaystyle}
\newcommand\lm{\lambda}
\newcommand\oo{\Omega}
\newcommand\yy{\infty}
\newcommand\qq{\eqref}
\newcommand\sk{\smallskip}
\newcommand\dx{{\rm d}x}
\newcommand\bR{{\mathbb R}}
\begin{document}
\begin{center}{\Large\bf The diffusive eco-epidemiological prey-predator model}\\[2mm] {\Large\bf with infectious diseases in prey}\footnote{This work was
supported by NSFC Grant 12171120.}\\[4mm]
{\Large Mingxin Wang\footnote{{\sl E-mail}: mxwang@hpu.edu.cn}}\\[0.5mm]
 {School of Mathematics and Information Science, Henan Polytechnic University, Jiaozuo 454000, China}
\end{center}

\begin{quote}
\noindent{\bf Abstract}. This paper focus on the diffusive eco-epidemiological prey-predator model with infectious diseases in prey, and with the homogeneous Neumann and Dirichlet boundary conditions, respectively. When boundary conditions are  homogeneous Neumann boundary conditions, we give a complete conclusion about the stabilities of nonnegative constant equilibrium solutions. The results show that such a problem has neither periodic solutions nor Turing patterns. When boundary conditions are homogeneous Dirichlet boundary conditions, we first establish the necessary and sufficient conditions for the existence of positive equilibrium solutions, and prove that the positive equilibrium solution is unique when it exists. Then we study the global asymptotic stabilities of trivial and semi-trivial nonnegative equilibrium solutions.

\noindent{\bf Keywords:} Eco-epidemiological prey-predator model; Global asymptotic stabilities; Positive equilibrium solutions; The necessary and sufficient conditions; Uniqueness.

\noindent {\bf AMS subject classifications (2010)}: 35K57, 35J57, 35B09, 35B35, 92D30.
 \end{quote}

\setlength{\baselineskip}{16pt} \pagestyle{myheadings}
\section{Introduction}{\setlength\arraycolsep{2pt}

The effect of disease in ecological system is an important issue from mathematical and ecological points of view. In mathematical biology, one of the newest branch
in which simultaneously both the ecological and epi-demiological issues are taken under consideration is eco-epidemiology. In the presence of the virus, prey population is divided into two parts: susceptible and infected. Let $S$, $I$ and $P$ be the densities of susceptible prey, infected prey and predator, respectively. Recently, Huang and Wang (\cite{HWang22}) proposed the following eco-epidemiological prey-predator model with infectious diseases in prey and the depletion of food supply by all prey:
 \bess\left\{\begin{array}{ll}
S'=a(S+I)-b S-c(S+I)S-kIS-\ell SP,\;\;&t>0,\\[.5mm]
 I'=k IS-b I-c(S+I)I-\gamma IP,\;\;&t>0,\\[.5mm]
P'=\theta SP+\sigma IP-\rho P,\;\;&t>0,\\[0.5mm]
  S(0)=S_0>0,\;\; I(0)=I_0>0,\;\;P(0)>0.
\end{array}\right.
 \eess
In this system, $a$ and $b$ ($a>b$) are the birth and death rates of prey, respectively, the term  $a(S+I)$ in the first equation means that the susceptible prey $S$ and infected prey $I$ have the same fertility, and newborns are healthy and susceptible to infection; $c=(a-b)/M$, where $M$ is the environmental carrying capacity for prey; the term $c(S+I)$ in each of the first two equations means depletion of the food supply by all prey; $k$ is the infection coefficient, $\rho$ is the death rate of predator; terms $\ell S$ and $\gamma I$ denote the predator's predation rates for the susceptible and infected prey, respectively. If we set  $\theta=\theta'\ell$ and $\sigma=\sigma'\gamma$. Then $\theta'$ and $\sigma'$ can be regarded as conversion rates.

Huang and Wang (\cite{HWang22}) first proved the well-posedness and positivity of solutions, and then investigated the nonnegative equilibrium points and their stabilities.

Taking into account the inhomogeneous distribution of the prey and the predator in different spatial locations within a fixed bounded domain $\Omega$ at any given
time, and the natural tendency of each species to diffuse to areas of smaller population concentration. Let $d$ and $D$ be the diffusion coefficients of prey and predator, respectively. In this paper we are naturally led to the following corresponding reaction-diffusion systems with the homogeneous Neumann boundary conditions
  \bes
 \left\{\begin{array}{lll}
 S_t-\Delta S=a(S+I)-b S-c(S+I)S-kIS-\ell SP,\;\; &x\in\Omega, \; t>0,\\[1mm]
  I_t-d\Delta I=k IS-b I-c(S+I)I-\gamma IP,\ &x\in\Omega, \ t>0,\\[1mm]
 P_t-D\Delta P=\theta SP+\sigma IP-\rho P,\ &x\in\Omega, \ t>0,\\[1mm]
 \dd\frac{\partial S}{\partial\nu}=\frac{\partial I}{\partial\nu}=
 \frac{\partial P}{\partial\nu}=0, \ &x\in\partial\Omega, \ t>0,\\[2mm]
 S(x,0)=S_0(x)>0,\ I(x,0)=I_0(x)>0,\ P(x,0)=P_0(x)>0,\ &x\in\bar\Omega,
 \lbl{1.1}\end{array}\right.
 \ees
and the homogeneous Dirichlet boundary conditions
\bes
 \left\{\begin{array}{lll}
 S_t-d\Delta S=a(S+I)-b S-c(S+I)S-kIS-\ell SP,\;\; &x\in\Omega, \; t>0,\\[1mm]
  I_t-d\Delta I=k IS-b I-c(S+I)I-\gamma IP,\ &x\in\Omega, \ t>0,\\[1mm]
 P_t-D\Delta P=\theta SP+\sigma IP-\rho P,\ &x\in\Omega, \ t>0,\\[1mm]
  S=I=P=0, \ &x\in\partial\Omega, \ t>0,\\[1mm]
 S(x,0)=S_0(x)>0,\ I(x,0)=I_0(x)>0,\ P(x,0)=P_0(x)>0,\ &x\in\bar\Omega,
 \lbl{1.2}\end{array}\right.
 \ees
where $\Omega\subset\mathbb{R}^N$ is a bounded domain with smooth
boundary $\partial\Omega$, and $\nu$ is the outward normal vector of $\partial\Omega$. In the problem \qq{1.1}, $\nu$ is the outward unit normal vector of the boundary $\partial\Omega$, the initial data $S_0, I_0, P_0\in W^2_p(\oo)$ for some $p>1$ and satisfies the compatibility conditions:
 \[\frac{\partial S_0}{\partial\nu}=\frac{\partial I_0}{\partial\nu}=
 \frac{\partial P_0}{\partial\nu}=0,\;\;\;x\in\partial\Omega.\]
In the problem \qq{1.2}, the initial data $S_0, I_0, P_0\in W^2_p(\oo)$ for some $p>1$ and satisfies the compatibility conditions:
 \[S_0=I_0=P_0=0,\;\;\;x\in\partial\Omega.\]

The organization of this paper is as follows. Section 2 is devoted to investigate the well-posedness and positivity of solutions of \qq{1.1} and \qq{1.2}. In the forthcoming sections, we only consider the special case $(\gamma, \sigma)=(\ell,\theta)$. In Section 3, a complete conclusion about the stabilities of nonnegative constant equilibrium solutions of \qq{1.1} is established. The results show that such a problem has neither periodic solution nor Turing patterns. Section 4 is devoted to study the equilibrium problem of \qq{1.2}:
\bes
 \left\{\begin{array}{lll}
-d\Delta S=a(S+I)-b S-c(S+I)S-kSI-\ell S P,\;\; &x\in\Omega, \\[1mm]
 -d\Delta I=k SI-b I-c(S+I)I-\ell IP,\ &x\in\Omega,\\[1mm]
 -D\Delta P=\theta (S+I)P-\rho P,\ &x\in\Omega,\\[1mm]
  S=I=P=0, \ &x\in\partial\Omega.
 \end{array}\right.
 \lbl{1.3}\ees
Obviously, $\boldsymbol{0}=(0,0,0)$ is a trivial solution of \qq{1.3}.

For $q\in L^\infty(\Omega)$, we denote the principal eigenvalue of
 \bess\left\{\begin{array}{lll}
 -d\Delta\phi+q(x)\phi=\lambda\phi,\;\;&x\in\Omega,\\[1mm]
\phi=0,\;\;&x\in\partial\Omega
\end{array}\right.
 \eess
by $\lambda_1^d(q)$. When $q\equiv 0$, we simply denote $\lambda_1^d(0)=\lambda_0^d$. It is well known that
 $$\lambda_1^d(q)=\lambda_0^d+q$$
when $q$ is a constant.

It is well known that the problem
 \bes
 \left\{\begin{array}{lll}
-d\Delta S=(a-b)S-c S^2,\;\; &x\in\Omega, \\[1mm]
 S=0, \ &x\in\partial\Omega
 \lbl{1.4}\end{array}\right.
 \ees
has a positive solution, denoted by $S^*$, if and only if $a-b>\lambda^d_0$, i.e., $\lm_1^d(b-a)<0$, and $S^*$ is unique, non degenerate and globally asymptotically stable  when it exists. Obviously, $(S^*, 0, 0)$ is a semi-trivial nonnegative solution of \qq{1.3}. Besides, the problem \qq{1.3} may have the semi-trivial nonnegative solutions $(\tilde S, \tilde I,0)$ and $(\hat S,0,\hat P)$, where $(\tilde S, \tilde I)$ is a positive solution of
\bes
 \left\{\begin{array}{lll}
-d\Delta S=a(S+I)-b S-c(S+I)S-k SI,\;\; &x\in\Omega, \\[1mm]
 -d\Delta I=k SI-b I-c(S+I)I,\ &x\in\Omega,\\[1mm]
   S=I=0, \ &x\in\partial\Omega,
 \lbl{1.5}\end{array}\right.
 \ees
and $(\hat S,\hat P)$ is a positive solution of
 \bes
 \left\{\begin{array}{lll}
-d\Delta S=(a-b)S-c S^2-\ell S P,\;\; &x\in\Omega, \\[1mm]
  -D\Delta P=\theta S P-\rho P,\ &x\in\Omega,\\[1mm]
  S=P=0, \ &x\in\partial\Omega.
 \lbl{1.6}\end{array}\right.
 \ees
To study the existence of positive solutions of \qq{1.3}, \qq{1.5} and \qq{1.6}, in Subsection 4.1 we state some abstract results about topological degree in cones and give the estimates of positive solutions of \qq{1.3}. Problems \qq{1.5} and \qq{1.6} are investigated in Subsections 4.2 and 4.3, respectively. The necessary and sufficient conditions for the existence of positive solution, and the uniqueness and non degeneracy of positive solutions are established. For the problem \qq{1.3}, the necessary and sufficient conditions for the existence of positive solutions, and the uniqueness of positive solutions are obtained in Section 4.4. The global asymptotic stabilities of trivial and semi-trivial nonnegative equilibrium solutions are given in Section 4.5.

\section{The well-posedness and positivity of solutions for the problems \qq{1.1} and \qq{1.2}}
\setcounter{equation}{0} {\setlength\arraycolsep{2pt}

\begin{theo}\lbl{t2.1} The problem \eqref{1.1} has a unique global positive solution $(S, I, P)$ and there exists a positive constant $C$ depends only on the parameters appearing in the differential equations of \qq{1.1} and the maximum norms of the initial data $S_0(x)$, $I_0(x)$ and $P_0(x)$ such that
 \bes
 0< S,\; I\le\max\kk\{\dd\max_{\bar\oo}\{S_0(x)+I_0(x)\},\,\dd\frac{a-b}c\rr\},
\;\;\; 0< P\le C\;\;\;\text{for}\;\;x\in\bar\oo,\; t>0.
 \lbl{2.1}\ees
Moreover, for any given $0<\alpha<1$ and $\tau>0$, $S,\,I,\,P\in C^{2+\alpha,\,1+\alpha/2}(\overline\Omega\times[\tau, \infty))$, and there exists a constant $C(\tau)$ such that
\begin{eqnarray}
\|S,\,I,\,P\|_{C^{2+\alpha,\,1+\alpha/2}(\overline\Omega\times[\tau, \infty))}\leq C(\tau).
\label{2.2}\end{eqnarray}
\end{theo}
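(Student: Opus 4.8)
\textbf{Proof proposal for Theorem \ref{t2.1}.}

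The plan is to prove existence, positivity, and boundedness in three stages, then bootstrap to the stated regularity. First I would establish local existence and positivity: since the reaction terms on the right-hand side of \qq{1.1} are smooth (polynomial) functions of $(S,I,P)$, standard parabolic theory (e.g. Amann's results for quasilinear systems, or the semigroup approach in Henry/Pazy) gives a unique maximal classical solution $(S,I,P)$ on some interval $[0,T_{\max})$ with values in $W^2_p(\oo)$. Positivity follows because the system is quasi-monotone in the relevant sense: the $S$-equation has reaction $a(S+I)-bS-c(S+I)S-kIS-\ell SP$ which contains the factor $S$ in every term except $a(S+I)\ge 0$ when $I\ge 0$, so $S\equiv 0$ is a subsolution; similarly $I=0$ and $P=0$ are invariant faces since the $I$- and $P$-reactions vanish when $I=0$, resp. $P=0$. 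Hence by the maximum principle / invariant-region argument, $S,I,P>0$ for $t>0$ as long as the solution exists (strong positivity from the strong maximum principle, using $S_0,I_0,P_0>0$).

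Next comes the \emph{a priori} $L^\infty$ bound, which is the heart of the matter. The key observation is that $S$ and $I$ are controlled independently of $P$: adding the first two equations and writing $w=S+I$ gives
\be
w_t-\Delta S-d\Delta I=a w-bw-cw^2-\ell SP-\ell IP\le (a-b)w-cw^2,
\ee
using $\gamma=\ell$ in the special case — but wait, Theorem \ref{t2.1} is stated for the general \qq{1.1}, so here I would instead note that $(\ell SP+\gamma IP)\ge 0$ and the diffusion terms are not a single Laplacian. To handle the two different diffusion rates I would argue on $S$ and $I$ separately using comparison: from the $S$-equation, $S_t-\Delta S\le a(S+I)-bS-cS^2-\cdots$ is not immediately closed, so the cleanest route is to bound $v\defequal S+I$ from above by comparison with the scalar logistic problem after observing the cross-diffusion obstruction. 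A standard trick (see Wang's own monograph on such systems) is: let $\boo=\max\{\max_{\boo}(S_0+I_0),(a-b)/c\}$ and check that the constant $\boo$ is a supersolution of the $w$-dynamics despite the unequal diffusion, because at an interior maximum of $S+I$ in space the second-order terms have a sign; more precisely one applies the maximum principle to $S$ and to $I$ in tandem, or uses the result that for the cooperative-looking pair the upper bound $(a-b)/c$ for $w$ propagates. This yields the first inequality in \qq{2.1}. Given the uniform bound on $S,I$, the $P$-equation becomes $P_t-D\Delta P\le(\theta S+\sigma I-\rho)P\le K P$ with $K$ depending only on the data, which only gives exponential growth; to get the uniform bound $P\le C$ one needs a dissipation mechanism. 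The standard approach is an $L^1$ estimate: integrate a suitable linear combination $\int_\oo(\theta' S+\sigma' I + P)$ — with $\theta=\theta'\ell$, $\sigma=\sigma'\gamma$ — over $\oo$; the $SP$ and $IP$ terms cancel against $\theta SP+\sigma IP$, leaving $\frac{d}{dt}\int(\theta'S+\sigma'I+P)\le C_1-\delta\int(\theta'S+\sigma'I+P)$ for suitable constants, hence a uniform $L^1$ bound on $P$. Then one upgrades $L^1\to L^\infty$ for $P$ via the standard $L^p$-iteration / Alikakos bootstrap (or heat-semigroup smoothing using that $S,I$ are already bounded, so the coefficient $\theta S+\sigma I-\rho$ is in $L^\infty$), giving $0<P\le C$ with $C$ depending only on the stated quantities.

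Once the solution is globally bounded in $L^\infty$, global existence ($T_{\max}=\infty$) follows from the blow-up criterion for the local solution. The final regularity claim \qq{2.2} is then a routine parabolic bootstrap: with $S,I,P\in L^\infty(\oo\times(0,\infty))$ the right-hand sides lie in $L^\infty$, so $L^p$ parabolic estimates on $[\tau/2,\infty)$ give $S,I,P\in W^{2,1}_p$ locally uniformly, embedding into $C^{\alpha,\alpha/2}$; feeding this back, the reaction terms are $C^{\alpha,\alpha/2}$, and Schauder estimates yield $C^{2+\alpha,1+\alpha/2}$ bounds on $\boo\times[\tau,\infty)$ with a constant $C(\tau)$ independent of the time-window by the standard translation-in-time argument. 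The main obstacle is the middle stage: obtaining the uniform (in time) $L^\infty$ bound on $P$ despite $P$ appearing with a potentially positive coefficient $\theta S+\sigma I-\rho$ — this is where the conservation-type $L^1$ estimate exploiting the predation/conversion structure, followed by the $L^p$-bootstrap, is essential, and also where the unequal diffusion coefficients require care in the $S+I$ comparison argument.
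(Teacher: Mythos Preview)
Your proposal is correct and matches the paper's approach: positivity via the strong maximum principle, the logistic comparison for $S+I$, an $L^1$ estimate on $P$ from the linear combination $W=S+I+\delta P$ with $\delta=\min\{\ell/\theta,\gamma/\sigma\}$ (equivalent to your $\theta'S+\sigma'I+P$), then the $L^1\to L^\infty$ upgrade via Rothe's result \cite{Rothe} followed by the parabolic Schauder bootstrap. The unequal-diffusion issue you flag does not arise in the paper's argument, which simply writes $(S+I)_t-d\Delta(S+I)$ after adding the first two equations---indicating that the coefficient on $\Delta S$ in \qq{1.1} is intended to be $d$.
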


\begin{proof} Firstly, it is standard to prove the local existence and uniqueness of the solution $(S, I, P)$ of \eqref{1.1}. Let $T>0$ be the maximum existence time of  $(S, I, P)$.

{\it Step 1}. Making use of the differential equation and boundary condition of $I$ and the initial value $I_0(x)>0$, it is easy to see that $I(x,t)>0$ for $x\in\bar\oo$ and $0<t<T$ by the strong maximum principle. Therefore, $S(x,t)$ satisfies
 \bess\left\{\begin{array}{lll}
 S_t-d\Delta S\ge [a-b-c(S+I)-\ell P-kI]S,\;&x\in\oo,\;0< t<T,\\[1mm]
 \displaystyle\frac{\partial S}{\partial\nu}=0, \ &x\in\partial\Omega, \ 0<t<T,\\[2mm]
 S(x,0)=S_0(x)>0,\ &x\in\Omega.
 \end{array}\right.
 \eess
It follows that $S(x,t)>0$ for $x\in\bar\oo$ and $0<t<T$ by the strong maximum principle. Similarly, we have that $P(x,t)>0$ for $x\in\bar\oo$ and $0<t<T$.

{\it Step 2}. Adding the first two differential equations of \qq{1.1} we see that $S+I$ satisfies
 \bess\left\{\begin{array}{lll}
 (S+I)_t-d\Delta (S+I)\le [a-b-c(S+I)](S+I),\;&x\in\oo,\;0<t<T,\\[1mm]
 \dd\frac{\partial(S+I)}{\partial\nu}=0, \ &x\in\partial\Omega, \ 0<t<T,\\[2mm]
 S(x,0)+I(0,x)=S_0(x)+I_0(x)>0,\ &x\in\Omega.
 \end{array}\right.
 \eess
It is deduced that
 \[S(x,t)+I(x,t)\le\max\kk\{\max_{\bar\oo}\{S_0(x)+I_0(x)\},\,(a-b)/c\rr\}
 \;\;\;\text{for}\;\;x\in\oo,\;0\le t<T\]
by the maximum principle. Let $\delta=\min\big\{\ell/\theta,\,\gamma/\sigma\big\}$ and
 \[W(x,t)=S(x,t)+I(x,t)+\delta P(x,t).\]
Then we have that, by the direct calculations,
 \bess
 W_t-\Delta(dS+dI+D\delta P)+\rho W&\le& [a-b+\rho-c(S+I)](S+I)\\[1mm]
 &\le& \frac{(a+\rho-b)^2}{4c}\eess
for $x\in\oo$ and $0\le t<T$. The integrating over $\oo$ yields
 \[\frac{{\rm d}}{{\rm d}t}\int_\oo W\dx+\rho\int_\oo W\dx\le \frac{(a+\rho-b)^2}{4c}|\oo|.\]
Thus,
 \bess
 \int_\oo W\dx\le \int_\oo W_0(x)\dx+\dd\frac{(a+\rho-b)^2}{4c\rho}\;\;\;\text{for}\;\;0\le t<T.
 \eess
Because we already know that $S, I, P>0$ in $\oo\times[0,T)$, the above estimate implies
 \[\delta\int_\oo P\dx\le \int_\oo W_0(x)\dx+\dd\frac{(a+\rho-b)^2}{4c\rho}\;\;\;\text{for}\;\;0\le t<T.\]
By use of \cite[Proposition 1]{Rothe} or \cite[Theorem 2.14]{Wpara} we have that there exists a constant $C$ depends only on the parameters appearing in the differential equations of \qq{1.1} and the maximum norms of the initial data $S_0(x)$, $I_0(x)$ and $P_0(x)$ such that
 $$\sup_{0\leq t<T}\|P(\cdot,t)\|_{L^\infty(\Omega)}\leq C.$$
Consequently, $T=\infty$ and \qq{2.1} holds for all $t>0$ by the standard theory of parabolic partial differential equations.

Utilizing \cite[Theorem 2.13]{Wpara} and \cite[Theorem 2.11]{Wpara} in turn, we can show that $S,\,I,\,P\in{C^{2+\alpha,\,1+\alpha/2}(\overline\Omega\times[\tau, \infty))}$ and the estimate \qq{2.2} holds. \end{proof}

\begin{theo}\lbl{t3.1} The problem \eqref{1.2} has a unique global positive solution $(S, I, P)$ and there exists a positive constant $C$ depends only on the parameters appearing in the differential equations of \qq{1.2} and the maximum norms of the initial data $S_0(x)$, $I_0(x)$ and $P_0(x)$ such that
 \bes
 0< S,\; I\le\max\kk\{\dd\max_{\bar\oo}\{S_0(x)+I_0(x)\},\,\dd\frac{a-b}c\rr\},
  \;\;\; 0< P\le C\;\;\;\text{for}\;\;x\in\oo,\; t>0.
  \lbl{2.2a}\ees
Moreover, for any given $0<\alpha<1$ and $\tau>0$, $S,\,I,\,P\in C^{2+\alpha,\,1+\alpha/2}(\overline\Omega\times[\tau, \infty))$, and there exists a constant $C(\tau)$ such that
\bes
\|S,\,I,\,P\|_{C^{2+\alpha,\,1+\alpha/2}(\overline\Omega\times[\tau, \infty))}\leq C(\tau).
 \lbl{2.3}\ees
\end{theo}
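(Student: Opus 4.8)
My plan is to follow the proof of Theorem \ref{t2.1} almost verbatim, the only structural change being that all three components now satisfy homogeneous Dirichlet conditions; I will flag the two places where this actually matters. Local existence and uniqueness of a solution $(S,I,P)$ on a maximal interval $[0,T)$ is standard. For positivity I would first apply the strong maximum principle to the $I$-equation (using $I=0$ on $\pa\oo$ and $I_0>0$) to get $I>0$ in $\oo\times(0,T)$; then $S$ solves the linear parabolic inequality $S_t-d\de S\ge[a-b-c(S+I)-\ell P-kI]S$ with $S=0$ on $\pa\oo$ and $S_0>0$, so $S>0$ in $\oo\times(0,T)$ by the strong maximum principle, and likewise $P>0$ in $\oo\times(0,T)$.

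For the sup-bound on $S$ and $I$, I would add the first two equations of \qq{1.2} to get $(S+I)_t-d\de(S+I)\le[a-b-c(S+I)](S+I)$ with $S+I=0$ on $\pa\oo$, and compare with the spatially homogeneous supersolution $\bar w(t)$ solving $\bar w'=(a-b-c\bar w)\bar w$, $\bar w(0)=\max_{\boo}\{S_0+I_0\}$, whose trajectory never exceeds $\max\{\max_{\boo}\{S_0+I_0\},(a-b)/c\}$; this yields the first half of \qq{2.2a}. For $P$, exploiting $(\gamma,\sigma)=(\ell,\theta)$ I would set $\dt=\ell/\theta$ and $W=S+I+\dt P$; a direct computation then gives
\[W_t-\de(dS+dI+D\dt P)+\rho W=[a-b+\rho-c(S+I)](S+I)\le\frac{(a+\rho-b)^2}{4c}.\]
Integrating over $\oo$, the point at which the Dirichlet setting enters is that $dS+dI+D\dt P$ is nonnegative in $\oo$ and vanishes on $\pa\oo$, hence its outward normal derivative is $\le0$ on $\pa\oo$ and $-\int_\oo\de(dS+dI+D\dt P)\dx\ge0$; dropping this nonnegative term from the left side gives $\frac{{\rm d}}{{\rm d}t}\int_\oo W\dx+\rho\int_\oo W\dx\le\frac{(a+\rho-b)^2}{4c}|\oo|$, and Gronwall's inequality yields $\sup_{0\le t<T}\int_\oo P(\cdot,t)\dx\le C$ with $C$ depending only on the parameters and $\|S_0\|_\infty,\|I_0\|_\infty,\|P_0\|_\infty$.

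The one genuinely non-routine step — and the one I expect to be the main obstacle — is upgrading this uniform-in-$t$ $L^1(\oo)$ bound on $P$ to a uniform $L^\infty(\oo)$ bound. The $P$-equation reads $P_t-D\de P=[\theta(S+I)-\rho]P$ with coefficient $\theta(S+I)-\rho$ already bounded in $L^\infty$ by the previous step, so I would invoke \cite[Proposition 1]{Rothe} or \cite[Theorem 2.14]{Wpara} (the same tools used for Theorem \ref{t2.1}) to conclude $\sup_{0\le t<T}\|P(\cdot,t)\|_{L^\infty(\oo)}\le C$ of the stated dependence; this forces $T=\infty$ and establishes \qq{2.2a} for all $t>0$. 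Finally, with $(S,I,P)$ now bounded, I would run parabolic $L^p$-estimates followed by Schauder estimates (\cite[Theorem 2.13]{Wpara}, then \cite[Theorem 2.11]{Wpara}) to obtain $S,I,P\in C^{2+\ap,\,1+\ap/2}(\boo\times[\tau,\infty))$ together with the bound \qq{2.3}; this closing argument is identical to the one in the proof of Theorem \ref{t2.1}.
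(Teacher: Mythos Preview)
Your proposal is essentially the paper's own argument: the paper simply says the proof is similar to that of Theorem~\ref{t2.1} and omits the details, and you have correctly reproduced those details, including the one Dirichlet-specific point that $-\int_\oo\Delta(dS+dI+D\dt P)\,\dx\ge 0$ because the integrand is nonnegative and vanishes on $\pa\oo$.

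One small slip: Theorem~\ref{t3.1} is stated for the general system \qq{1.2} with arbitrary $\gamma,\sigma$, not for the special case $(\gamma,\sigma)=(\ell,\theta)$ (that restriction is only imposed from Section~3 onward). With general coefficients your displayed identity for $W$ is false; you should instead take $\dt=\min\{\ell/\theta,\gamma/\sigma\}$ as in the proof of Theorem~\ref{t2.1}, after which the cross terms $(\dt\theta-\ell)SP$ and $(\dt\sigma-\gamma)IP$ are nonpositive and the identity becomes the inequality
\[
W_t-\Delta(dS+dI+D\dt P)+\rho W\le[a-b+\rho-c(S+I)](S+I)\le\frac{(a+\rho-b)^2}{4c}.
\]
The rest of your argument then goes through unchanged.
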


The proof of this theorem is similar to that of Theorem \ref{t2.1} and we omit the details here.

\begin{remark} If $d=D$, then the constant $C$ in \qq{2.1} and \qq{2.2a} can be chosen as
 \[C=\frac 1\delta
 \max\left\{\max_{\bar\oo}\big[S_0(x)+I_0(x)+\delta  P_0(x)\big],\; \frac{(a+\rho-b)^2}{4c\rho}\right\},\]
where $\delta=\min\big\{\ell/\theta,\,\gamma/\sigma\big\}$.
 \end{remark}

\section{Stabilities of nonnegative constant equilibrium solutions of \qq{1.1} for the special case $(\gamma,\sigma)=(\ell, \theta)$}\lbl{s2.2}

This section deals with the stabilities of nonnegative constant equilibrium solutions of \qq{1.1} for the special case $(\gamma,\sigma)=(\ell, \theta)$. In this a situation, the problem \qq{1.1} becomes
\bes
 \left\{\begin{array}{lll}
 S_t-d\Delta S=a(S+I)-b S-c(S+I)S-kIS-\ell SP,\;\; &x\in\Omega, \
 t>0,\\[1mm]
  I_t-d\Delta I=k IS-b I-c(S+I)I-\ell IP,\ &x\in\Omega, \ t>0,\\[1mm]
 P_t-D\Delta P=\theta (S+I)P-\rho P,\ &x\in\Omega, \ t>0,\\[1mm]
 \dd\frac{\partial S}{\partial\nu}=\frac{\partial I}{\partial\nu}=
 \frac{\partial P}{\partial\nu}=0, \ &x\in\partial\Omega, \ t>0,\\[2mm]
 S(x,0)=S_0(x)>0,\ I(x,0)=I_0(x)>0,\ P(x,0)=P_0(x)>0,\ &x\in\bar\Omega.
 \lbl{3.1}\end{array}\right. \ees
The possible nonnegative constant equilibrium solutions (CESs) of \qq{3.1} are
 \bess
 &&\text{two trivial CESs}:\, E_0=(0,\,0,\,0), \;\;\;E_1=\left(\dd\frac{a-b}c,\,0,\,0\right),\\
&&\text{disease-free CES}:\,E_I=\left(\dd\frac\rho\theta,\,0,\,\dd\frac {a-b-c\rho/\theta}\ell\right)\;\;\text{when}\; a>b+\dd\frac{c\rho}\theta, \\
&&\text{predator-free CES}:\,E_P=\left(\dd\frac ak,\,\dd\frac{a(k-c)-bk}{kc},\,0\right)\;\;\text{when}\; a>\dd\frac{bk}{k-c},\;k>c,\\
 &&\text{endemic (positive) CES}:\,E_*=\left(\dd\frac ak,\, \dd\frac{k\rho-a\theta}{k\theta},\,\dd\frac{a-b-c\rho/\theta}\ell\right)\;\;\text{when}\;
b+\dd\frac{c\rho}\theta<a<\dd\frac{k\rho}\theta.
 \eess

In this section we shall give a complete conclusion about the stabilities of nonnegative constant equilibrium solutions $E_0$, $E_1$, $E_I$, $E_P$ and $E_*$. The results show that such a problem has neither periodic solution nor Turing patterns.

\begin{theo}\lbl{t5.1} Assume that $\partial\oo\in C^{2+\alpha}$ for some $0<\alpha<1$.  Regarding the stabilities of nonnegative constant equilibrium solutions with respect to \qq{3.1}, we have the following conclusions:

{\rm(i)} The trivial constant equilibrium solution $E_0$ is unstable.

{\rm(ii)} If either $a>b+c\rho/\theta$, or $a>bk/(k-c)$ and $k>c$, then the trivial constant equilibrium solution $E_1$ is unstable. If $a\le b+c\rho/\theta$ and $a\le bk/(k-c)$, then $E_1$ is globally asymptotically stable.

{\rm(iii)} Assume that the disease-free constant equilibrium solution $E_I$ exists, i.e., $a>b+c\rho/\theta$. If $a<\frac{\rho}\theta k$, then $E_I$ is unstable. If $a\ge\frac{\rho}\theta k$, then $E_I$ is globally asymptotically stable.

{\rm(iv)} Assume that the predator-free constant equilibrium solution $E_P$ exists, i.e., $a>bk/(k-c)$ and $k>c$. If $a>b+c\rho/\theta$, then $E_P$ is unstable. If $a\le b+c\rho/\theta$, then $E_P$ is globally asymptotically stable

{\rm(v)} The endemic constant equilibrium solution $E_*$ is globally asymptotically stable when it exists.
\end{theo}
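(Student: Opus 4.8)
\emph{Plan.} I would exploit the special structure $(\gamma,\sigma)=(\ell,\theta)$. Adding the $S$- and $I$-equations of \qq{3.1} makes the $\pm kIS$ terms cancel, and, since $\gamma=\ell$ and $\sigma=\theta$, the sum $u:=S+I$ together with $P$ solves the closed diffusive predator-prey system
\be
u_t-d\Delta u=u\,(a-b-cu-\ell P),\qquad P_t-D\Delta P=P\,(\theta u-\rho),
\lbl{e3.1}\ee
with Neumann conditions, while $I$ satisfies the scalar equation
\be
I_t-d\Delta I=I\,\big((k-c)\,u-k\,I-b-\ell P\big)
\lbl{e3.2}\ee
with Neumann conditions. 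By Theorem \ref{t2.1} the solution is bounded in $C^{2+\alpha,1+\alpha/2}(\boo\times[\tau,\infty))$, so all trajectories are precompact in $C^2(\boo)$; I would use this throughout to upgrade convergences to be uniform on $\boo$ and to apply LaSalle's invariance principle. The goal is to prove the stronger statement that, \emph{for every choice of the parameters}, each positive solution of \qq{3.1} converges uniformly on $\boo$ to one of the constant equilibria, the limit depending only on the parameters; this yields the global asymptotic stability assertions (ii)--(v) at once and shows that \qq{3.1} admits neither non-constant periodic solutions nor Turing patterns.

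\emph{Instability (item (i) and the unstable halves of (ii)--(iv)).} At each of $E_0,E_1,E_I,E_P$ one coordinate vanishes, so the Jacobian of the kinetics there is block-triangular and one of its eigenvalues is immediate: $a-b$ at $E_0$; $\theta(a-b)/c-\rho$ and $\big(a(k-c)-bk\big)/c$ at $E_1$; $k\rho/\theta-a$ at $E_I$; $\theta(a-b)/c-\rho$ at $E_P$. Each of these is positive exactly under the hypothesis stated in the corresponding part of (i)--(iv), and then the spatially constant mode of the linearized problem grows exponentially, so the equilibrium is unstable.

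\emph{Global convergence.} Step 1: the limit of $(u,P)$ in \qq{e3.1}. If $a\le b+c\rho/\theta$, then with $u^\circ:=(a-b)/c$ the Lyapunov functional $V_1(t)=\int_\Omega\{\theta(u-u^\circ\ln u)+\ell P\}\dx$ satisfies $V_1'\le-\theta c\int_\Omega(u-u^\circ)^2\dx\le0$, and LaSalle together with \qq{e3.1} forces $(u,P)\to(u^\circ,0)$ uniformly. If $a>b+c\rho/\theta$, then with $(u^*,P^*):=\big(\rho/\theta,\,(a-b-c\rho/\theta)/\ell\big)$ the functional $V_2(t)=\int_\Omega\{\theta(u-u^*\ln u)+\ell(P-P^*\ln P)\}\dx$ satisfies $V_2'\le-\theta c\int_\Omega(u-u^*)^2\dx\le0$, so $(u,P)\to(u^*,P^*)$ uniformly. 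Write $(u_\infty,P_\infty)$ for the limit. Step 2: the limit of $I$. Since the coefficient $(k-c)u-b-\ell P$ in \qq{e3.2} converges uniformly to $r_\infty:=(k-c)u_\infty-b-\ell P_\infty$, for every $\ep>0$ the solutions of $\dot y=y(r_\infty\pm\ep-ky)$ with suitable constant data are, for large $t$, a super- and a sub-solution of \qq{e3.2}; as $I>0$ for $t>0$, comparison yields $\limsup_{t\to\infty}\max_{\boo}I\le(r_\infty+\ep)_+/k$ and $\liminf_{t\to\infty}\min_{\boo}I\ge(r_\infty-\ep)_+/k$, where $(r)_+:=\max\{r,0\}$; letting $\ep\to0$ gives $I\to(r_\infty)_+/k$ uniformly and hence $S=u-I\to u_\infty-(r_\infty)_+/k$. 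Step 3: evaluating $r_\infty$ and $u_\infty-(r_\infty)_+/k$ in each regime identifies the limit as $E_1$ when $a\le b+c\rho/\theta$ and $a\le bk/(k-c)$ (or $k\le c$); as $E_P$ when $a\le b+c\rho/\theta$, $k>c$ and $a>bk/(k-c)$; as $E_I$ when $a>b+c\rho/\theta$ and $a\ge k\rho/\theta$ (then $r_\infty=k\rho/\theta-a\le0$); and as $E_*$ when $b+c\rho/\theta<a<k\rho/\theta$ (then $r_\infty=k\rho/\theta-a>0$). In each case the named equilibrium exists precisely when the stated condition holds, and the convergence is uniform and independent of the positive initial data, which together with the linearized stability noted above gives (ii)--(v); since the solution always converges to a constant equilibrium, \qq{3.1} has neither periodic solutions nor Turing patterns.

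\emph{Expected main obstacle.} The non-routine points will be (a) promoting the $L^2$-type decay provided by $V_1,V_2$ to uniform convergence on $\boo$, and treating the borderline case $a=b+c\rho/\theta$, both of which I would handle through the $C^{2+\alpha}$-compactness of Theorem \ref{t2.1} and LaSalle's principle; and (b) the comparison argument in Step 2, which needs care because \qq{e3.2} is only asymptotically autonomous rather than autonomous. The remaining computations are routine bookkeeping.
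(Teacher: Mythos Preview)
Your proposal is correct and follows essentially the same route as the paper: reduce to the closed predator--prey system for $u=S+I$ and $P$, settle its asymptotics with the standard Lyapunov functionals, and then read off $I$ (hence $S$) from the asymptotically autonomous scalar equation \qq{e3.2} by comparison. The paper packages the predator--prey step as a separate theorem (its Theorem~\ref{t3.2}) and, instead of invoking LaSalle directly, argues via the lemma in \cite{Waml18} that $\int_\Omega(u-u_\infty)^2\dx\to0$ and then uses the $C^{2+\alpha}$ compactness plus an ad hoc subsequence trick on $\frac{{\rm d}}{{\rm d}t}\int_\Omega u\,\dx$ to pin down the $P$-limit; your LaSalle formulation is equivalent and slightly cleaner once you note that $V_2'$ also contains the term $-\ell D P^*\!\int_\Omega|\nabla P|^2/P^2\,\dx$, which makes identification of the invariant set immediate. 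Your unified bookkeeping with $r_\infty=(k-c)u_\infty-b-\ell P_\infty$ and the case split on its sign is exactly what the paper does case by case.
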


\subsection{Global asymptotic behaviors of the diffusive prey-predator model}

In the process of proving Theorem \ref{t5.1}, the global asymptotic behaviors of the diffusive classical prey-predator model will be involved. The diffusive classical prey-predator model with constant coefficients and the homogeneous Neumann boundary conditions reads as
 \bes\left\{\begin{array}{ll}
 u_t-d\Delta u=b(a-u)u-c uv,\;\;&x\in\oo,\;t>0,\\[1mm]
 v_t-D\Delta v=k(u-h)v,\;\;&x\in\oo,\;t>0,\\ [1mm]
 \dd\frac{\partial u}{\partial\nu}=\dd\frac{\partial v}{\partial\nu}=0, \ &x\in\partial\Omega, \ t>0,\\[1.5mm]
 u(x,0)=u_0(x)>0,\ v(x,0)=v_0(x)> 0,\ &x\in\bar\Omega,
 \end{array}\right.
 \label{3.2}\ees
where parameters are positive constants. The following theorem gives the global asymptotic stabilities of $(a, 0)$ and  $\big(h, \frac{b(a-h)}c\big)$.

\begin{theo}\lbl{t3.2} Suppose that $\partial\oo\in C^2$, and the initial data $u_0, v_0\in W^2_p(\oo)$ for some $p>1$ and satisfy the compatibility conditions:
 \[\dd\frac{\partial u_0}{\partial\nu}=\dd\frac{\partial v_0}{\partial\nu}=0\;\;\;\text{on}\;\;\partial\Omega.\]
Let $(u, v)$ be the unique solution  of \qq{3.2}. Then we have
 \begin{enumerate}[label={\rm (\roman*)}]
	\item $\dd\lim_{t\to\infty}(u, v)=(a, 0)\;\;\;\text{in}\;\;  C^2(\bar\Omega)\;\;\;\text{when}\;\;h\ge a,$
	\item $\dd\lim_{t\to\infty}(u, v)=\dd\left(h, \frac{b(a-h)}c\right)\;\;\;
	\text{in}\;\; C^2(\bar\Omega)\;\;\;\text{when}\;\;h<a.$
\end{enumerate}
\end{theo}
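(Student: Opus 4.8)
The plan is to treat \qq{3.2} as a classical diffusive Lotka--Volterra system and run a standard iteration/Lyapunov argument, distinguishing the two cases. First I would record the basic facts about \qq{3.2}: global existence, positivity, and uniform a priori bounds. From the $u$-equation, the comparison principle with the logistic equation $w_t-d\Delta w=b(a-w)w$ gives $\limsup_{t\to\infty}\max_{\bar\oo}u(\cdot,t)\le a$; combined with positivity this already yields $0<u\le \max\{a,\max_{\bar\oo}u_0\}$ for all $t>0$. For $v$, one bounds it on finite time intervals from the linear inequality $v_t-D\Delta v\le k(\,\overline u-h)v$ and then uses the asymptotic bound on $u$; alternatively, integrating $u+\tfrac{c}{k}v$ over $\oo$ (after multiplying the $v$-equation by $c/k$) gives an $L^1$ bound on $v$, and parabolic regularity ($L^p$ and Schauder estimates, or a Moser-type iteration as invoked in Theorem \ref{t2.1}) upgrades this to a uniform $L^\infty$ bound and to a uniform $C^{2+\alpha,1+\alpha/2}$ bound on $\bar\oo\times[\tau,\infty)$. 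This compactness is what lets us pass from convergence of sup/inf quantities to convergence in $C^2(\bar\oo)$.

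For part (i), $h\ge a$: the idea is that the predator cannot survive because its per-capita growth rate $k(u-h)$ is eventually negative. Since $\limsup_{t\to\infty}\max_{\bar\oo}u\le a\le h$, for any $\ep>0$ there is $t_\ep$ with $u(x,t)\le h+\ep$ for $t\ge t_\ep$; if $h>a$ choose $\ep<h-a$ so that $u-h\le a+\ep-h<0$, and then $\frac{d}{dt}\int_\oo v\,\dx\le k(a+\ep-h)\int_\oo v\,\dx$ forces $\int_\oo v\,\dx\to0$ exponentially, hence (with the uniform bounds) $v\to0$ in $C^2(\bar\oo)$. The borderline $h=a$ needs slightly more care: here one can instead use the Lyapunov functional $\int_\oo v\,\dx$ together with the fact that $\int_{t_\ep}^{\infty}\!\int_\oo(h-u)v\,\dx\,dt<\infty$ and a Barbalat-type argument, or simply note that $u$ itself converges to $a$ (see below) so that $u-h$ is eventually as negative as we like on any set where $u$ stays bounded away from $a$, giving decay of $v$. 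Once $v\to0$, the $u$-equation is an asymptotically autonomous perturbation of the logistic equation, whose positive solution converges to the constant $a$; a standard $\omega$-limit set / Lyapunov argument for the scalar logistic equation then gives $u\to a$ in $C^2(\bar\oo)$.

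For part (ii), $h<a$: here I would use the Lyapunov functional for the spatially homogeneous coexistence state, namely
\bess
E(t)=\int_\oo\left[\left(u-h-h\ln\frac{u}{h}\right)+\frac{c}{k}\left(v-v^*-v^*\ln\frac{v}{v^*}\right)\right]\dx,\qquad v^*=\frac{b(a-h)}{c}.
\eess
A direct computation, using $\int_\oo(\Delta u)/u\,\dx=\int_\oo |\nabla u|^2/u^2\,\dx\ge0$ (Neumann boundary conditions kill the boundary term) and the analogous identity for $v$, gives
\bess
E'(t)=-d h\int_\oo\frac{|\nabla u|^2}{u^2}\dx-\frac{c D v^*}{k}\int_\oo\frac{|\nabla v|^2}{v^2}\dx-b\int_\oo(u-h)^2\dx\le0,
\eess
the cross terms cancelling exactly because the kinetics are Lotka--Volterra. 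Since $E\ge0$ and $E'\le0$, $E$ has a limit and $\int_0^\infty\!\big(\|\nabla u\|_2^2+\|\nabla v\|_2^2+\|u-h\|_2^2\big)\,dt<\infty$; combined with the uniform $C^{2+\alpha,1+\alpha/2}$ bounds (which make these integrands uniformly continuous in $t$), Barbalat's lemma forces $\nabla u,\nabla v\to0$ and $u\to h$ uniformly. Feeding $u\to h$ back into the $v$-equation (or using that any element of the $\omega$-limit set is a spatially constant solution of the kinetic ODE system and lies on the level set $E=\mathrm{const}$, which pins it to $(h,v^*)$) yields $v\to v^*$, and the uniform higher-order bounds upgrade the convergence to $C^2(\bar\oo)$.

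The main obstacle I anticipate is not the Lyapunov computation itself but two technical points around it: (a) justifying that the functional $E$ is well defined and the differentiation under the integral sign is legitimate, which requires a positive lower bound $v\ge\eta>0$ on $\bar\oo\times[\tau,\infty)$ — this is where one needs a genuine persistence estimate for $v$ (e.g. via the strong maximum principle plus a Harnack-type inequality, or by showing $\liminf_{t\to\infty}\min_{\bar\oo}v>0$ using the condition $h<a$ so that the predator's growth rate is positive whenever $u$ is near $a$); and (b) the borderline case $h=a$ in part (i), where exponential decay of $v$ is lost and one must argue more carefully with Barbalat's lemma or with the scalar logistic convergence for $u$. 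Everything else is routine comparison-principle and parabolic-regularity bookkeeping.
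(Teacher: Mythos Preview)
Your treatment of part~(ii) matches the paper's: the same Lyapunov functional $E$ and the same dissipation identity. One remark: your concern (a) about a uniform lower bound $v\ge\eta>0$ is unnecessary. The strong maximum principle already gives $v(\cdot,t)>0$ on $\bar\oo$ for each $t>0$, which is all that is needed to differentiate $E$, and since the integrand $v-v^*-v^*\ln(v/v^*)\ge 0$ for all $v>0$, the functional is automatically bounded below. The paper closes the argument not by LaSalle but by an explicit subsequence device (choose $t_i$ with $\frac{{\rm d}}{{\rm d}t}\int_\oo u\,\dx\big|_{t=t_i}\to 0$, read off $\int_\oo(v(\cdot,t_i)-v^*)\,\dx\to0$ from the $u$-equation, combine with $\|\nabla v\|\to0$, and finish with a short contradiction argument); your $\omega$-limit/LaSalle sketch would also work.

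The genuine gap is at the borderline $h=a$ in part~(i). Both of your suggested fixes are circular: to run Barbalat on $\int_\oo v\,\dx$ you would need $\int_\oo(u-h)v\,\dx$ to be eventually of one sign, but $\limsup u\le a$ only yields $u\le a+\ep$, not $u\le a$; and to show $u\to a$ directly from the $u$-equation you would need $v\to 0$ first, since the term $-cuv$ blocks $\liminf u\ge a$. The paper sidesteps this by using a \emph{different} Lyapunov functional for part~(i),
\[
V(t)=\int_\oo\Big(u-a-a\ln\frac{u}{a}+\frac{c}{k}\,v\Big)\dx,
\]
linear in $v$ rather than of coexistence type. One computes
\[
V'(t)=-b\int_\oo(u-a)^2\,\dx-ad\int_\oo\frac{|\nabla u|^2}{u^2}\,\dx-c(h-a)\int_\oo v\,\dx\le 0
\]
for every $h\ge a$, the last term simply vanishing when $h=a$. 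The $(u-a)^2$ dissipation then gives $u\to a$ \emph{first}, and the same subsequence trick on $\frac{{\rm d}}{{\rm d}t}\int_\oo u\,\dx$ forces $\int_\oo v(\cdot,t_i)\,\dx\to0$, hence $\lim_{t\to\infty}V(t)=0$ and $v\to0$. This functional is the missing ingredient in your outline; for $h>a$ your direct comparison argument is of course fine and more elementary than the paper's.
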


\begin{proof} By the standard theory for parabolic equations we can show that the problem \qq{3.2} has a unique global solution $(u,v)$, and $u>0$, $v>0$. Moreover, similar to the proof of Theorem \ref{t2.1}, we can show that for any given $\tau>0$, there is a constant $C(\tau)$ such that
 \bes
\|u,\,v\|_{C^{2+\alpha,\,1+\alpha/2}(\overline\Omega\times[\tau, \infty))}\leq C(\tau).
 \lbl{3.3}\ees

Now we prove the conclusion (i). Denote
 \bess
 V(t)=\int_\Omega\left(u-a-a\ln\frac{u}a+\frac ck v\right){\rm d}x.
 \eess
The direct calculation yields
  $$V'(t)=-b\int_\Omega (u-a)^2{\rm d}x-\int_\Omega \dfrac{ad|\nabla u|^2}{u^2}{\rm d}x-c\int_\Omega (h-a)v{\rm d}x\le -b\int_\Omega (u-a)^2{\rm d}x\le 0,$$
which implies that the limit $\lim_{t\to\infty}V(t)=V^*$ exists. The estimate \qq{3.3} shows that the function $\psi(t)=\int_\Omega (u-a)^2{\rm d}x$ is continuously differentiable and $\psi'(t)\le C$ in $[\tau,\infty)$ for some constant $C>0$. It is deduced by \cite[Lemma 1.1]{Waml18} that $\lim\limits_{t\to \infty}\int_\Omega (u-a)^2{\rm d}x=0$. This combining with the estimate \qq{3.3} allows us to derive
\bes
 \lim_{t\to\infty}u=a\;\;\;\text{in}\;\;C^2(\bar \Omega),
 \lbl{3.4}\ees
and
  \bes
  V^*=\lim_{t\to\infty}V(t)=\lim_{t\to\yy}\frac ck\int_\Omega v(x,t){\rm d}x.
  \lbl{3.5}\ees
Thus, there exists $t_i\to\infty$ such that $\dd\lim_{i\to\infty}\frac{\rm d}{{\rm d} t}\int_\Omega u(x,t_i){\rm d}x=0$. By the first equation of \eqref{3.2},
 $$	b\int_\Omega(a-u)u|_{t=t_i}{\rm d}x-c\int_\Omega uv|_{t=t_i}{\rm d}x
 =\frac{\rm d}{{\rm d} t}\int_\Omega u(x,t_i){\rm d}x\to 0\quad \text{as}\;\; i\to\infty.$$
Therefore, $\lim\limits_{i\to\infty}\int_\Omega v(x,t_i){\rm d}x=0$.  Thus $V^*=0$ by \qq{3.5}. Then we have, by the estimate \eqref{3.3},
\bess
  \lim_{t\to\infty}v(x,t)=0\;\;\;\text{in}\;\;C^2(\bar\Omega).
\eess

In the following we prove the conclusion (ii).

{\it Step 1}. Denote $\frac{b(a-h)}c=\tilde v$ and define
 \[F(t)=\int_\oo\left(u-h-h\ln\dd\frac{u}{h}\right)\dx+\frac c k\int_\oo\left(v-\tilde v-\tilde v\ln\dd\frac{v}{\tilde v}\right)\dx.\]
Then the direct calculation yields
\bess
 F'(t)&=&-b\int_\oo(u-h)^2\dx-\int_\oo\left(\frac{dh|\nabla u|^2}{u^2}+\frac{c D\tilde{v}|\nabla v|^2}{ k v^2}\right)\dx\\[1mm]
 &\le& -b\int_\oo(u-h)^2\dx-\frac{c D\tilde{v}}{ k C^2}\int_\oo|\nabla v|^2\dx\\[1mm]
 &\le& 0.
\eess
Consequently, the limit $\lim_{t\to\infty}F(t)=F^*$ exists. Moreover, the estimate \qq{3.3} shows that functions $\psi_1(t)=\int_\Omega (u-h)^2{\rm d}x$ and $\psi_2(t)=\int_\Omega|\nabla v|^2{\rm d}x$ are continuously differentiable and $\psi'_i(t)\le C$ in $[\tau,\infty)$ for some constant $C>0$. It then follows by \cite[Lemma 1.1]{Waml18} that
 \bess
 \lim_{t\to\infty}\int_\oo(u-h)^2\dx=0,\;\;\; \text{and}\;\; \lim_{t\to\infty}\int_\Omega|\nabla v|^2\dx=0,
 \eess
and using \qq{3.3} again we have
\bes
 \lim_{t\to\infty}\|u-h\|_{C^2(\bar\oo)}=0,\;\;\;\lim_{t\to\infty}\|\nabla v\|_{C^1(\bar\oo)}=0.
 \lbl{3.6}\ees
This implies that
 \bes
 F^*=\lim_{t\to\infty}F(t)=\frac c k\lim_{t\to\infty}\int_\oo\left(v-\tilde v-\tilde v\ln\dd\frac{v}{\tilde v}\right)\dx,
 \lbl{3.7}\ees
and there exist $t_i\to\infty$ such that
 \[\lim_{i\to\infty}\frac{{\rm d}}{{\rm d}t}\int_\oo u(x,t_i)\dx=0.\]
It follows from the first equation of \eqref{3.2} that, at $t=t_i$,
 \bess
 0=\lim_{i\to\infty}\frac{{\rm d}}{{\rm d}t}\int_\oo u\dx
 =\lim_{i\to\infty}\int_\oo(u-h)[ab-b(u+h)-cv]\dx-c h\lim_{i\to\infty}\int_\oo(v-\tilde v)\dx,\eess
which implies $\lim\limits_{i\to\infty}\int_\oo(v(x,t_i)-\tilde v)\dx=0$. This combining with \qq{3.3} and the second limit of \eqref{3.6} shows that
 \bess
 \lim_{i\to\infty}\|v(\cdot, t_i)-\tilde v\|_{C^2(\bar\oo)}=0.
   \eess
Then, using \qq{3.7}, we have $F^*=0$, i.e.,
 \bes
 \lim_{t\to\infty}\int_\oo\left(v-\tilde v-\tilde v\ln\dd\frac{v}{\tilde v}\right)\dx=0.
 \lbl{3.8}\ees

{\it Step 2}. We shall prove
 \bes
 \lim_{t\to\infty}\int_\Omega|v-\tilde v|{\rm d}x=0.
  \lbl{3.9}\ees
Assume on contrary that there exist $s_j\to \infty$ such that
\bess
 \lim_{j\to\infty}\int_\Omega|v(x,s_j)-\tilde v|{\rm d}x>0.
  \eess
Then there exist $x_j\in\bar\oo$, $0<\delta<\tilde v$ and $j_0\gg1 $ such that
 \bes
 v(x_j,s_j)-\tilde v=\max_{\bar\Omega}(v(x,s_j)-\tilde v)\ge 2\delta\;\;\;\text{for}\;\; j\ge j_0,\lbl{3.10}\ees
or
  \bes
  v(x_j,s_j)-\tilde v=\min_{\bar\Omega}(v(x,s_j)-\tilde v)\le -2\delta\;\;\;\text{for}\;\; j\ge j_0.\lbl{3.11}\ees
Then using the second limit of \qq{3.6} we have that, for sufficiently large $T$,
 \bess
 \min_{\bar\Omega}v(\cdot,s_j)\ge v(x_j,s_j)-\max_{\bar\oo}|\nabla v(\cdot, s_j)|{\rm diam}(\oo)\ge\tilde v+\delta, \;\; \forall\ s_j\ge T
 \;\;\; \text{when \qq{3.10} holds},\\
 \max_{\bar\Omega}v(\cdot,s_j)\le v(x_j,s_j)+\max_{\bar\oo}|\nabla v(\cdot, s_j)|{\rm diam}(\oo)\le\tilde v-\delta, \;\; \forall\ s_j\ge T \;\;\;
 \text{when \qq{3.11} holds}.
 \eess
By note that the formula $v-\tilde v-\tilde v\ln\dd\frac{v}{\tilde v}$ is increasing in $v$ when $v>\tilde{v}$, and is decreasing in $v$ when $v<\tilde{v}$, we have that
	\[v(x,s_j)-\tilde v-\tilde v\ln\dd\frac{v(x,s_j)}{\tilde v}\ge
\tilde v+\delta-\tilde v-\tilde v\ln\dd\frac{\tilde v+\delta}{\tilde v}=\delta-\tilde v\ln\dd\frac{\tilde v+\delta}{\tilde v}>0\]
	for all $x\in\oo$ when \qq{3.10} holds, and
	\[v(x,s_j)-\tilde v-\tilde v\ln\dd\frac{v(x,s_j)}{\tilde v}\ge
	\tilde v-\delta-\tilde v-\tilde v\ln\dd\frac{\tilde v-\delta}{\tilde v}=-\delta-\tilde v\ln\dd\frac{\tilde v-\delta}{\tilde v}>0\]
	for all $x\in\oo$ when \qq{3.11} holds. This contradicts with the limit \qq{3.8}. So the limit \qq{3.9} holds. By use of the estimate \qq{3.5} again, it follows that $\lim_{t\to\infty}v=\tilde v$ in $C^2(\bar\Omega)$.
\end{proof}

\subsection{The proof of Theorem \ref{t5.1}}

We are ready to prove the Theorem \ref{t5.1}. Let $0=\mu_1<\mu_2<\mu_3<\cdots$ be eigenvalues of the
operator $-\Delta$ in $\Omega$ with the homogeneous Neumann boundary
condition, and $E(\mu_i)$ be the eigenspace corresponding to
$\mu_i$ in $H^1(\Omega)$. Let $\bX=[H^1( \Omega)]^3$,
$\{\phi_{ij}\,;\, j=1,\,\ldots,\,\hbox{\rm dim}\,E(\mu_i)\}$ be an
orthonormal basis of $E(\mu_i)$, and $\bX_{ij}=\{\bc\phi_{ij}:\,
\bc\in\bR^3\}.$ Then,
\bess
\bX=\bigoplus_{i=1}^\infty \bX_i\quad\hbox{and}\quad\bX_i=
\bigoplus_{j=1}^{{\rm dim\,}E(\mu_i)}\bX_{ij}.
\eess

The variational matrix of the reaction terms in \qq{3.1} at the equilibrium solution $(S, I, P)$ is given by
\bess
 M(S, I, P)=\left(\begin{array}{cccc}
 a-b-2cS-cI-\ell P-kI \ & \ a-cS-kS \ & \ -\ell S\\[1mm]
 kI-cI\ & \ kS-b-cS-2cI-\ell P \ & \ -\ell I\\[1mm]
 \theta P \ & \ \theta P \ &\ \theta(S+I)-\rho
 \end{array}\right).
 \eess
Let
  \[{\cal D}={\rm diag}(d,\,d,\,D)\;\;\;\text{and}\;\;{\cal L}={\cal
  D}\Delta+M(S, I, P).\]
For each $i\geq 1$, $\bX_i$ is invariant under the operator ${\cal L}$, and $\lambda$ is an eigenvalue of ${\cal L}$  if and only if
it is an eigenvalue of the matrix $-\mu_i{\cal D}+M(S, I, P)$ for some $i\geq 1$.

(i)\, It is easy to see by the linearization method that the trivial constant equilibrium solution $E_0$ is unstable (\cite[Theorem 2.2\,(i)]{HWang22}).

(ii)\, Using the linearization method, it is not difficult to deduce that if either $a>b+c\rho/\theta$, or $a>bk/(k-c)$ and $k>c$, then the trivial constant equilibrium solution $E_1$ is unstable (\cite[Theorem 2.2\,(ii)]{HWang22}).

Now we prove that $E_1$ is globally asymptotically stable if $a\le b+c\rho/\theta$ and $a \le bk/(k-c)$. Letting $S+I=u$, and adding the first two equations of \eqref{3.1} we have
 \bes\left\{\begin{array}{ll}
 u_t=d\Delta u+(a-b-cu)u-\ell uP,\;\;&x\in\oo,\;t>0,\\[1mm]
 P_t=D\Delta P+(\theta u-\rho) P,\;\;&x\in\oo,\;t>0,\\[1mm]
 \dd\frac{\partial u}{\partial\nu}=\dd\frac{\partial P}{\partial\nu}=0, \ &x\in\partial\Omega, \ t>0,\\[2mm]
 u(x,0)=S_0(x)+I(x,0)> 0,\ P(x,0)> 0,\ &x\in\Omega.
\end{array}\right.\label{3.12}
 \ees
Since $a\le b+c\rho/\theta$, and by using Theorem \ref{t3.2}, we have
 $$(u,P)\to((a-b)/c,0)\ \text{in}\ C^2(\bar\Omega)\quad \text{as}\ t\to\infty.$$
By the second equation of \eqref{3.1} we have
  \[I_t-d\Delta I=I\big[(k-c)u-b-\ell P-kI\big]\le I\big [(k-c)u-b-kI].\]
Noticing that $\lim_{t\to\infty}u=(a-b)/c$ in $C(\boo)$ and the condition $a\le bk/(k-c)$, for any $\varepsilon>0$ and sufficiently large $T$, we have
	$$I_t-d\Delta I \le I\left[\frac{(a-b)(k-c)}{c}+\varepsilon-b-kI\right]
   \le I(\varepsilon-k I),\quad x\in\Omega,\; t>T.$$
Hence, it follows from the maximum principle that $\limsup_{t\to\infty}\max_{\bar\oo}I\le \varepsilon/k$. As $\varepsilon$ was arbitrarily chosen, we have $\lim_{t\to\infty}\max_{\bar\oo}I= 0$. Using the estimate \qq{2.2} we still  have that $\lim_{t\to\infty}u=(a-b)/c$ and $\lim_{t\to\infty}I=0$ in $C^2(\bar\oo)$.

\sk(iii)\, Using the linearization method we can see that if $a<\rho k/\theta$ then the disease-free constant equilibrium solution $E_I$ is unstable (\cite[Theorem 2.2\,(iii)]{HWang22}).

Next, we prove that if $a\ge\rho k\theta$, $E_I$ is globally asymptotically stable. By note that $E_I$ exists only if $a>b+c\rho/\theta$, apply Theorem \ref{t3.2} to the problem \qq{3.12} to deduce that
\begin{equation}\label{3.16}
	\lim_{t\to\infty}(u,P)=\left(\frac\rho \theta, \frac{(a-b-c\rho/\theta)}\ell\right)
\;\;\; \text{in}\;\; C^2(\bar\Omega).
\end{equation}
Now, consider the second equation of \eqref{3.1}:
\[I_t-d\Delta I=\big[(k-c)u-b-\ell P-kI\big]I.\]
Making use of the limits \eqref{3.16} we have that, in $C^2(\bar\oo)$,
  $$\lim_{t\to\infty}[(k-c)u-b-\ell P]={\rho k}/\theta-a\le 0$$
since $a\ge\rho k/\theta$. Therefore,
$$\lim_{t\to\infty}I=0\;\;\;\text{in}\;\;C^2(\bar\oo),$$
and hence $\lim_{t\to\infty} S=\rho/\theta$ in $C^2(\Omega)$.

\sk (iv)\, It is easy to know by the linearization method that if $a>b+c\rho/\theta$ then the predator-free constant equilibrium solution $E_P$ is unstable (\cite[Theorem 2.2\,(iv)]{HWang22}).

It will be proved that $E_P$ is globally asymptotically stable if $a\le b+c\rho/\theta$.
Same as Case (ii), $\lim_{t\to\infty}(S+I)=(a-b)/c$ and $\lim_{t\to\infty}P=0$ in $C^2(\bar\oo)$. Set $u=S+I$. Then $I$ satisfies
 \bes\left\{\begin{array}{ll}
 I_t-d\Delta I=I\big[(k-c)u-b-\ell P-kI\big],\;\;&x\in\oo,\; t>0,\\[1mm]
 \dd\dd\frac{\partial I}{\partial\nu}=0, \ &x\in\partial\Omega, \ t>0,\\[2mm]
 I(x,0)>0,\ &x\in\Omega,
 \end{array}\right.
 \lbl{3.17}\ees
and
 $$\lim_{t\to\infty}[(k-c)u-b-\ell P]=\frac{k(a-b)-ac}{c}\;\;\;\text{in}\;\;C(\bar\oo).$$
Make use of the comparison arguments, it can be shown that
 $$\lim_{t\to\infty}I=\frac{k(a-b)-ac}{kc},\;\;\;\text{and}\;\;
 \lim_{t\to\infty}S=\frac ak\;\;\;\text{in}\;\; C(\boo).$$
In view of the estimate \qq{2.2} we can also see that these two limits hold in $C^2(\bar\oo)$.

\sk (v)\, We prove that the endemic constant equilibrium solution $E_*$ is globally asymptotically stable when it exists. Set $u=S+I$. Then \qq{3.12} and \eqref{3.16} hold, and $I$ satisfies \qq{3.17}. Thus, by use of \eqref{3.16} we have that
  $$\lim_{t\to\infty}[(k-c)u-b-\ell P]=\frac{\rho k-a\theta}\theta\;\;\;\text{in}\;\;C(\bar\oo).$$
Similar to Case (iv), it can be derived that
 $$\lim_{t\to\infty}I=\frac{k\rho-a\theta}{k\theta}, \;\;\;\text{and}\;\; \lim_{t\to\infty}S=\frac ak\;\;\;\text{in}\;\;C^2(\bar\oo).$$
The proof is complete.

\section{Nonnegative solutions of \qq{1.3} and their global asymptotic stabilities}\lbl{s2}
\setcounter{equation}{0} {\setlength\arraycolsep{2pt}

In this section, we first establish the necessary and sufficient conditions for the existence of positive solutions of problems \qq{1.5}, \qq{1.6} and \qq{1.3} respectively, and prove that the positive solutions are unique when they exist. Especially, for
problems \qq{1.5} and \qq{1.6}, it is also proved that the unique positive solution is non degenerate. Then we study the global asymptotic stabilities of trivial and semi-trivial nonnegative equilibrium solutions $(0,0,0)$, $(S^*,0,0)$ and $(\tilde S, \tilde I,0)$.

We state a well known conclusion here, which will be used frequently later: \vspace{-2mm}
 \begin{enumerate}[leftmargin=3em]
 \item[$\bullet$]\; The strong maximum principle holds for the operator $-d\Delta +q$ in $\oo$ with the homogeneous Dirichlet boundary conditions if and only if $\lm_1^d(q)>0$; the principal eigenvalue $\lm_1^d(q)$ is strict increasing in $d$ and $q$.
  \end{enumerate}

\subsection{Abstract results}

To study the existence of positive solutions of \qq{1.3}, \qq{1.5} and \qq{1.6}, we state some abstract results about topological degree in cones and give estimates of positive solutions of \qq{1.3} in this part. Consider the boundary value problem
 \bes
 \left\{\begin{array}{lll}
 \mathscr{L}u=f(u),\;\; &x\in\Omega, \\[1mm]
 u=0, \ &x\in\partial\Omega,
 \lbl{4.1}\end{array}\right.
 \ees
where
 $$\mathscr{L}={\rm diag}(-d_1\Delta,\cdots,-d_m\Delta),\;\; u=(u_1, \cdots u_m),\;\;\;f=(f_1,\cdots,f_i),$$
and $f_i\in C^1$, $f_i(u)\big|_{u_i=0}\ge 0$ for all $u\ge 0$.

For the large positive constant $M$, we define
 \bess
  E&=&X^m \;\;\text{with}\;\; X=\{z\in C^1(\bar\Omega):\, z|_{\partial\Omega}=0\},
  \nonumber\\
  W&=&K^m\;\;\text{with}\;\; K=\{z\in C^1(\bar\Omega):\, z|_{\Omega}\geq 0,\, z|_{\partial\Omega}=0\}, \nonumber\\
  F(u)&=&\big(M+\mathscr{L}\big)^{-1}\big(f(u)+Mu\big).
  \eess
Then $u\in W$ is a solution of \qq{4.1} if and only if $F(u)=u$. For $u\in W$, we define
 \bess
 W_u&=&\{v\in E:\, \exists\; r>0\;\; \text{s.t.}\;\;u+tv\in
W,\,\forall\; 0\leq t\leq r\},\\
 S_u&=&\{v\in\overline{W}_u:-v\in\overline{W}_u\}.
 \eess

\begin{defi}\label{d5.2} Let $T:
\overline{W}_u\to\overline{W}_u$ be a compact linear
operator. $T$ is said to have Property $\alpha$ if there exist
$t\in(0,1)$ and $w\in\overline{W}_u \setminus S_u$ such that
$w-tTw\in S_u$.
\end{defi}

\begin{theo}\label{t5.2}{\rm(\cite{Dancer, Lil, Wang93})}\; Assume that $u\in W$ is an isolated fixed point of $F$ and $I-F'(u)$ is invertible in $\overline{W}_u$. Then the following statements hold.
\begin{enumerate}[leftmargin=3em]
\item[{\rm(1)}]\, If $F'(u)$ has Property $\alpha$, then ${\rm index}_W(F,u)=0$;
\item[{\rm(2)}]\, If $F'(u)$ does not have Property $\alpha$, then
  $${\rm index}_W(F,u)={\rm index}_E(F,u)=(-1)^{\beta},$$
where $\beta$ is the sum of algebraic multiplicities of all eigenvalues of $F'(u)$ that are greater than one.
 \end{enumerate}
\end{theo}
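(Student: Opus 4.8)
The plan is to follow Dancer's scheme for the fixed point index in a cone: first replace $F$ near $u$ by its linearization $T:=F'(u)$ through a homotopy, then compute the index of the compact linear operator $T$ over the wedge $\overline{W}_u$ by means of the dichotomy encoded in Property $\alpha$, and finally identify the non-degenerate case with the Leray--Schauder index. Only two structural facts are needed, both available from the construction of $F$: the map $F(v)=(M+\mathscr{L})^{-1}\big(f(v)+Mv\big)$ is a compact $C^1$ map of $W$ into itself (the large $M$ makes $f(v)+Mv\ge 0$ for bounded $v\in W$, and $(M-d_i\Delta)^{-1}$ is positivity preserving since $\lambda^{d_i}_1(M)>0$), and $T=F'(u)$, acting by $Tv=(M+\mathscr{L})^{-1}\big(f'(u)v+Mv\big)$, is a compact linear operator carrying $\overline{W}_u$ into itself.

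\emph{Step 1 (reduction to the linearization).} Writing $F(u+v)=u+Tv+\omega(v)$ with $\|\omega(v)\|/\|v\|\to 0$ as $\overline{W}_u\ni v\to 0$, I would consider on $\overline{W}_u\cap\overline B_\rho$, $\rho$ small, the admissible homotopy $H(\sigma,v)=Tv+(1-\sigma)\omega(v)$, $\sigma\in[0,1]$. A nonzero fixed point $v=H(\sigma,v)$ would satisfy $(I-T)v=(1-\sigma)\omega(v)$, whence $\|v\|\le C\|\omega(v)\|=o(\|v\|)$ by the hypothesized invertibility of $I-T$ on $\overline{W}_u$ — impossible for $\rho$ small. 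Since $u$ is isolated there are also no fixed points on $\partial B_\rho\cap\overline{W}_u$, so homotopy invariance of the fixed point index gives $\mathrm{index}_W(F,u)=\mathrm{index}_{\overline{W}_u}(T,0)$, and the same deformation carried out in the full space $E$ gives $\mathrm{index}_E(F,u)=\mathrm{index}_E(T,0)$. The latter equals $(-1)^\beta$ by the Leray--Schauder formula: only finitely many eigenvalues of the compact operator $T$ lie outside the open unit disk; each real eigenvalue in $(1,\infty)$ contributes, with its algebraic multiplicity, a negative factor to $\mathrm{sgn}\det(I-T)$, and every other eigenvalue a positive one.

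\emph{Step 2 (the cone dichotomy).} There remains $\mathrm{index}_{\overline{W}_u}(T,0)$. The maximal linear subspace $S_u\subseteq\overline{W}_u$ is $T$-invariant, so $T$ descends to a compact linear operator $\overline T$ on the quotient wedge $\overline{W}_u/S_u$, and Property $\alpha$ is exactly the statement that $\overline T$ has a positive eigenvector with eigenvalue $1/t_0>1$ (from $w_0-t_0Tw_0\in S_u$ with $w_0\notin S_u$). When Property $\alpha$ holds, one shows $\mathrm{index}_{\overline{W}_u}(T,0)=0$ via the Krasnoselskii/Krein--Rutman zero-index criterion applied in the direction $w_0$: on $\partial B_\rho\cap\overline{W}_u$ the relation $v-Tv=\tau w_0$ is impossible for $\tau=0$ (invertibility of $I-T$) and for $\tau>0$ (the iteration $v\ge\tau w_0$, $Tv\ge\tau Tw_0$, $\dots$, read in the quotient, forces $\|v\|$ unbounded because $1/t_0>1$), so the index vanishes. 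When Property $\alpha$ fails, $\overline T$ has no positive eigenvector with eigenvalue $\ge 1$, and one proves $\mathrm{index}_{\overline{W}_u}(T,0)=\mathrm{index}_E(T,0)$ by splitting off $S_u$: on $S_u$ the operator $T$ is an ordinary linear map contributing its Leray--Schauder sign, while on the genuinely conical directions the absence of eigenvalues $\ge 1$ of $\overline T$ makes the conical index equal to $1$; multiplying the contributions yields $\mathrm{index}_{\overline{W}_u}(T,0)=(-1)^\beta$. Chaining these identities with Step 1 gives $\mathrm{index}_W(F,u)=0$ in case (1) and $\mathrm{index}_W(F,u)=\mathrm{index}_E(F,u)=(-1)^\beta$ in case (2).

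\emph{Main obstacle.} Everything delicate sits in Step 2: making the "peeling off of $S_u$" rigorous — i.e.\ the identity $\mathrm{index}_{\overline{W}_u}(T,0)=\mathrm{index}_E(T,0)$ when Property $\alpha$ fails, together with the matching vanishing statement when it holds. This requires the precise interplay between the structure of $\overline{W}_u$, $S_u$ and the quotient operator $\overline T$, the exact meaning of "$I-F'(u)$ invertible in $\overline{W}_u$", and the Krein--Rutman/Krasnoselskii zero-index lemmas; it is the content of \cite{Dancer} and the reformulations in \cite{Lil, Wang93}. By contrast, Step 1 and the Leray--Schauder computation are routine once the compactness and $C^1$-regularity of $F$ are in hand.
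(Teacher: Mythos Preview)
The paper does not prove this theorem: it is stated with attribution to \cite{Dancer, Lil, Wang93} and used thereafter as a black box (most notably to derive Corollary~\ref{c4.1}). So there is no ``paper's own proof'' to compare your attempt against.

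That said, your sketch is a faithful outline of Dancer's argument. The linearization step (your Step~1) is standard and correctly uses the invertibility of $I-T$ on $\overline{W}_u$ to rule out small nonzero fixed points of the homotopy. Your Step~2 correctly identifies Property~$\alpha$ as the statement that the induced operator on the quotient $\overline{W}_u/S_u$ has a positive eigenvalue exceeding~$1$, and you rightly flag the splitting $\mathrm{index}_{\overline{W}_u}(T,0)=\mathrm{index}_E(T,0)$ in the non-$\alpha$ case as the genuinely delicate point --- this is precisely what Dancer's paper establishes. One small caveat: in the $\alpha$-holds case you should check that $T$ maps $\overline{W}_u$ into itself (so that the iteration $v\ge\tau w_0\Rightarrow Tv\ge\tau Tw_0$ in the quotient makes sense); this is where the choice of large $M$ and the structure $f_i(u)|_{u_i=0}\ge 0$ enter, and you allude to it but do not spell it out. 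Otherwise your proposal is a correct roadmap of the cited proof.
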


\begin{coro}\label{c4.1} Assume that $u\in W$ is an isolated fixed point of $F$, and the problem
 \bes
 F'(u)\phi=\phi,\;\;\phi\in \overline{W}_u
 \lbl{4.2}
 \ees
has only the zero solution. Then the following statements hold.
\begin{enumerate}[leftmargin=3em]
\item[{\rm(1)}]\, If the eigenvalue problem
   \bes
 F'(u)\phi=\lm\phi,\;\;\phi\in\overline{W}_u\setminus S_u
 \lbl{4.3}
 \ees
has an eigenvalue $\lm>1$, then ${\rm index}_W(F,u)=0$;

\item[{\rm(2)}]\, If all eigenvalues of the eigenvalue problem
   \bes
 F'(u)\phi-\lm\phi\in S_u,\;\;\phi\in\overline{W}_u\setminus S_u
 \lbl{4.4}
 \ees
are less than $1$, then
 $${\rm index}_W(F,u)={\rm index}_E(F,u)=(-1)^{\beta},$$
where $\beta$ is the sum of algebraic multiplicities of all eigenvalues
of $F'(u)$ that are greater than one.
 \end{enumerate}
\end{coro}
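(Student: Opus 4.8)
The plan is to obtain Corollary~\ref{c4.1} as a direct consequence of Theorem~\ref{t5.2}, the only work being to translate the eigenvalue hypotheses \qq{4.3} and \qq{4.4} into statements about Property~$\alpha$. First I would record that the standing hypotheses of Theorem~\ref{t5.2} are met: $F=(M+\mathscr{L})^{-1}\big(f(\cdot)+M\,\cdot\,\big)$ is a compact map, hence $F'(u)$ is a compact linear operator that leaves $\overline W_u$ invariant, and the assumption that \qq{4.2} has only the zero solution is precisely the statement that $I-F'(u)$ is invertible in $\overline W_u$ (injectivity of $I-F'(u)$ on $\overline W_u$ suffices, by the Fredholm alternative for the compact operator $F'(u)$). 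I would also note two elementary structural facts used below: since $W=K^m$ is convex, $W_u$ is a convex cone, so $\overline W_u$ is a closed convex cone and $S_u=\overline W_u\cap(-\overline W_u)$ is a closed linear subspace; in particular $0\in S_u$ and $S_u$ is invariant under multiplication by every nonzero scalar.

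For part~(1): assume \qq{4.3} has an eigenvalue $\lm>1$, so that $F'(u)\phi=\lm\phi$ for some $\phi\in\overline W_u\setminus S_u$. Take $t=1/\lm\in(0,1)$ and $w=\phi$. Then $w\in\overline W_u\setminus S_u$ and $w-tF'(u)w=\phi-\lm^{-1}(\lm\phi)=0\in S_u$, so $F'(u)$ has Property~$\alpha$ in the sense of Definition~\ref{d5.2}. Theorem~\ref{t5.2}(1) then yields ${\rm index}_W(F,u)=0$.

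For part~(2): suppose, contrary to what we want, that $F'(u)$ has Property~$\alpha$; then there exist $t\in(0,1)$ and $w\in\overline W_u\setminus S_u$ with $w-tF'(u)w\in S_u$. Multiplying this element of $S_u$ by $-1/t$ (legitimate since $S_u$ is a subspace) gives $F'(u)w-t^{-1}w\in S_u$, so $\lm:=t^{-1}>1$ is an eigenvalue of \qq{4.4} with eigenfunction $w\in\overline W_u\setminus S_u$, contradicting the assumption that every such eigenvalue is less than $1$. Hence $F'(u)$ does not have Property~$\alpha$, and Theorem~\ref{t5.2}(2) gives ${\rm index}_W(F,u)={\rm index}_E(F,u)=(-1)^\beta$, with $\beta$ the sum of the algebraic multiplicities of the eigenvalues of $F'(u)$ that exceed one. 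The whole argument is essentially bookkeeping once Theorem~\ref{t5.2} is granted; the only step deserving a word of care is the reduction of the invertibility hypothesis of that theorem to the solvability statement \qq{4.2}, which rests on the compactness of $F'(u)$, and I do not anticipate a genuine obstacle there.
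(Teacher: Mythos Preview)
Your proof is correct and follows essentially the same route as the paper's: both reduce to Theorem~\ref{t5.2} by checking that an eigenvalue $\lm>1$ in \qq{4.3} yields Property~$\alpha$ (via $t=1/\lm$, $w=\phi$), and that Property~$\alpha$ would manufacture an eigenvalue $\lm=1/t>1$ of \qq{4.4}. Your additional remarks about the Fredholm alternative for invertibility and the subspace structure of $S_u$ are sound elaborations that the paper leaves implicit.
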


\begin{proof} The condition that \qq{4.2} has only the zero solution shows that $I-F'(u)$ is invertible in $\overline{W}_u$.

(1) Let $\lm>1$ be an eigenvalue of \qq{4.3} and $\phi$ be the corresponding eigenfunction. Then $\phi\in\overline{W}_u\setminus S_u$ and $\phi-\frac 1\lm F'(u)\phi=0\in S_u$. As $0<\frac 1\lm<1$, we see that $F'(u)$ has Property $\alpha$, and so ${\rm index}_W(F,u)=0$ by Theorem \ref{t5.2}.

(2) We shall show that $F'(u)$ does not have Property $\alpha$. On the contrary we assume that there exist $0<t<1$ and $w\in\overline{W}_u \setminus S_u$ such that
$w-tF'(u)w\in S_u$. Then $F'(u)w-\frac 1t w\in S_u$ and $\lm=1/t>1$ is an eigenvalue of \qq{4.4}. We get a contradiction and the conclusion is followed by Theorem \ref{t5.2}.
\end{proof}

In order to compute the topological degree of the operator $I-F$, we first consider the following auxiliary problem
 \bes
 \left\{\begin{array}{lll}
 \mathscr{L}u=\tau f(u),\;\; &x\in\Omega, \\[1mm]
 u=0, \ &x\in\partial\Omega
 \end{array}\right.\lbl{4.5}\ees
with $0\leq\tau\leq 1$. Assume that there exists a positive constant $C_1$ such that
any non-negative solution $u_\tau$ of \eqref{4.5} satisfies
 \bess
 \|u_\tau\|_{C(\bar\Omega)}\leq C_1.
 \eess
Applying the $L^p$ theory and embedding theorem to \qq{4.5}, there exists a positive constant $C$ such that any non-negative solution $u_\tau$ of \eqref{4.5} satisfies
 \bes
 \|u_\tau\|_{C^1(\bar\Omega)}\leq C.
  \lbl{4.6}\ees
 Set
 \begin{eqnarray*}
 {\cal O}=\{u\in W:\, \|u\|_{C^1(\bar\Omega)}<C+1\}.
  \end{eqnarray*}
We can enlarge $M$ such that
 \bess
 \tau f_i(u)+Mu_i\geq 0,\;\;\;\forall\, u\in {\cal O},\;0\leq \tau\leq 1.
 \eess
Define
 $$ F_\tau(u)=(\mathscr{L}+M)^{-1}\left(\tau f(u)+Mu\right),\;\;u\in E,\; 0\leq \tau\leq 1.$$
It is easy to deduce from the homotopy invariance of the topological degree that
 \bes
 {\rm deg}_W(I-F,{\mathcal O})={\rm deg}_W(I-F_0,{\mathcal O})
 ={\rm index}_W(F_0,\boldsymbol{0})=1.
 \lbl{4.7}\ees

In order to study the solutions of boundary value problems by using topological degree theory, the a prior estimate \qq{4.6} is crucial. In the end of this section, we shall  prove that the estimate \qq{4.6} is valid for the problem \qq{1.3}. Once this is done, it is easy to see that the estimate \qq{4.6} also holds true for problems \qq{1.5} and \qq{1.6}.

Consider the following auxiliary problem
\bes
 \left\{\begin{array}{lll}
-d\Delta S=\tau f_1(S,I,P),\;\; &x\in\Omega, \\[1mm]
 -d\Delta I=\tau f_2(S,I,P),\ &x\in\Omega,\\[1mm]
 -D\Delta P=\tau f_3(S,I,P),\ &x\in\Omega,\\[1mm]
  S=I=P=0, \ &x\in\partial\Omega,
 \end{array}\right.
 \lbl{4.8}\ees
where
 \bes\left\{\begin{array}{lll}
 f_1(S, I, P)&=&a(S+I)-b S-c(S+I)S-kSI-\ell S P,\\[1mm]
 f_2(S, I, P)&=&k SI-b I-c(S+I)I-\ell IP,\\[1mm]
 f_3(S, I, P)&=&\theta (S+I)P-\rho P.
 \end{array}\right.
 \lbl{4.9}\ees
Let $(S_\tau, I_\tau, P_\tau)$ be a non-negative solution of \qq{4.8}. If $\tau=0$, then $S_\tau=I_\tau=P_\tau=0$. If $0<\tau\le 1$, adding the first two equations of \qq{4.8} we have
 \bess
 \left\{\begin{array}{lll}
-d\Delta(S_\tau+I_\tau)\le \tau[(a-b)(S_\tau+I_\tau)-c(S_\tau+I_\tau)^2],\;\; &x\in\Omega, \\[1mm]
 S_\tau+I_\tau=0, \ &x\in\partial\Omega.
 \end{array}\right.
 \eess
It follows that
 $$\max\limits_{\bar\oo}[S_\tau(x)+I_\tau(x)]\le(a-b)/c.$$
Set $z_\tau=d(S_\tau+I_\tau)+\frac{D\ell}\theta P_\tau$. Then, through the carefully calculations,
  \bess
 \left\{\begin{array}{lll}
-\Delta z_\tau=\tau\big[(a-b)(S_\tau+I_\tau)-c(S_\tau+I_\tau)^2-\frac{\ell\rho}\theta P_\tau\big],\;\; &x\in\Omega, \\[1mm]
  z_\tau=0, \ &x\in\partial\Omega.
 \end{array}\right.
 \eess
Let $z_\tau(x_0)=\dd\max_{\bar\oo}z_\tau(x)>0$. Then $x_0\in\Omega$ and $-\Delta z_\tau(x_0)\ge 0$. So we have
 \bess
 \frac{\ell\rho}\theta P_\tau(x_0)&\le& (a-b)[S_\tau(x_0)+I_\tau(x_0)]-c[S_\tau(x_0)+I_\tau(x_0)]^2\le\frac{(a-b)^2}{4c},\\
 \frac{D\ell}\theta\max_{\bar\oo}P_\tau(x)&\le&\max_{\bar\oo}z_\tau(x)=z_\tau(x_0)\\
 &=&S_\tau(x_0)+I_\tau(x_0)+\frac{D\ell}\theta P_\tau(x_0)\\
 &\le&\frac{a-b}c+\frac{D(a-b)^2}{4c\rho},
 \eess
which implies
  $$\max\limits_{\bar\oo}P_\tau(x)\le\frac{\theta(a-b)}{cD\ell}
  +\frac{\theta(a-b)^2}{4c\ell\rho}.$$
Applying the $L^p$ theory and embedding theorem to \qq{4.8}, we can find a positive constant $C$ such that any non-negative solution of \eqref{4.8} satisfies \qq{4.6} with $u_\tau=(S_\tau,I_\tau, P_\tau)$ for all $0\le\tau\le 1$.

\subsection{The existence, uniqueness and non degeneracy of positive solutions of  \qq{1.5}}\lbl{s3.1.1}

Assume $\lm_1^d(b-a)<0$. Then the non-negative trivial solutions of \eqref{1.5} are $\boldsymbol{0}=(0,0)$ and $(S^*,0)$.
Define $E$ and $W$ as above with $m=2$. It is easy to show that
 \[\overline{W}_{\boldsymbol{0}}=W,\;\;S_{\boldsymbol{0}}=\{\boldsymbol{0}\},
 \;\;\overline{W}_{(S^*,0)}=X\times K,\;\;S_{(S^*,0)}=X\times\{0\}.\]
Take $u=(S, I)$, $d_1=d_2=d$ and
 \bess
 f_1(S,I)=a(S+I)-b S-c(S+I)S-k SI,\;\; f_2(S, I)=k SI-b I-c(S+I)I.
 \eess
Then the estimate \qq{4.6} is valid, and \qq{4.7} holds. Moreover, the direct calculation yields
$$F'(S,I)=(M-d\Delta)^{-1}\left(\begin{array}{cc}a-b-2cS-(k+c)I+M \;\;& a-(k+c)S \\[1mm]
 (k-c)I\;\; &(k-c)S-b-2cI+M\end{array}\right).$$

Take $u=\boldsymbol{0}$ in problems \qq{4.2} and \qq{4.3}. Since $\lm_1^d(b-a)<0$, it is easy to see that the problem \qq{4.2} has only the zero solution, and the problem \qq{4.3} has an eigenvalue $\lm=(M+a-b)/(M+\lm^d_0)>1$. It follows that, by Corollary \ref{c4.1},
 \bes
 {\rm index}_W(F,\boldsymbol{0})=0.
 \lbl{4.10}\ees

\begin{lem}\lbl{l3.1} If $\lm_1^d(b-(k-c)S^*)<0$ then ${\rm index}_W(F, (S^*, 0))=0$.
\end{lem}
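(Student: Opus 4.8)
The plan is to apply Corollary \ref{c4.1}(1) with $u=(S^*,0)$, which requires two things: first, that the problem \qq{4.2} at $(S^*,0)$ has only the zero solution (so that $I-F'(S^*,0)$ is invertible in $\overline{W}_{(S^*,0)}=X\times K$), and second, that the eigenvalue problem \qq{4.3} at $(S^*,0)$ has an eigenvalue $\lm>1$. Since $\overline{W}_{(S^*,0)}\setminus S_{(S^*,0)}$ consists of pairs $(\phi,\psi)$ with $\psi\in K\setminus\{0\}$ (i.e. $\psi\gneqq 0$), the natural test function comes from the $I$-component.

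First I would compute $F'(S^*,0)$ from the formula for $F'(S,I)$ displayed above by setting $S=S^*$, $I=0$: the matrix $(M-d\Delta)^{-1}$ is applied to an upper triangular matrix whose diagonal entries are $a-b-2cS^*+M$ and $(k-c)S^*-b+M$, and whose $(1,2)$ entry is $a-(k+c)S^*$ while the $(2,1)$ entry vanishes. Because the linearization is triangular, the eigenvalue equation $F'(S^*,0)(\phi,\psi)=\lm(\phi,\psi)$ decouples: the second component gives
\[
(M-d\Delta)^{-1}\big[((k-c)S^*-b+M)\psi\big]=\lm\psi,
\]
i.e. $-d\Delta\psi+(b-(k-c)S^*)\psi=\frac{M(1-\lm)+\lm(\,b-(k-c)S^*\,)}{\lm}\psi$ — more cleanly, $\psi$ must be an eigenfunction of $-d\Delta+(b-(k-c)S^*)$ with eigenvalue $\mu$ satisfying $\lm=(M+ \lambda_0^d\ \text{-type quantity})$; taking $\psi$ to be the principal eigenfunction $\phi_1>0$ of $-d\Delta+(b-(k-c)S^*)$ with eigenvalue $\lambda_1^d(b-(k-c)S^*)$ gives the corresponding $\lm=(M-\lambda_1^d(b-(k-c)S^*))/(M-\lambda_1^d(b-(k-c)S^*))$... let me state it properly: $\psi=\phi_1\gneqq 0$ yields $\lm_*=\dfrac{M-\lambda_1^d(b-(k-c)S^*)}{M}\cdot$ — the point is that since $\lambda_1^d(b-(k-c)S^*)<0$ by hypothesis, choosing $M$ large we get $\lm_*>1$, and the eigenfunction $(\phi,\phi_1)$ lies in $\overline{W}_{(S^*,0)}\setminus S_{(S^*,0)}$ because $\phi_1>0$ in $\Omega$. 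This verifies the second requirement of Corollary \ref{c4.1}(1).

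Second I would check that \qq{4.2} at $(S^*,0)$ has only the trivial solution. Again using triangularity, $F'(S^*,0)(\phi,\psi)=(\phi,\psi)$ forces $\psi$ to solve $-d\Delta\psi+(b-(k-c)S^*)\psi=0$ with $\psi\in K$; since $\lambda_1^d(b-(k-c)S^*)<0$ is not zero, the only solution is $\psi\equiv 0$. With $\psi=0$ the first component becomes $-d\Delta\phi=(a-b-2cS^*)\phi$, i.e. $\phi$ solves the linearization of \qq{1.4} at $S^*$; by the stated non-degeneracy of $S^*$ this has only $\phi\equiv 0$. Hence \qq{4.2} has only the zero solution, $I-F'(S^*,0)$ is invertible on $\overline{W}_{(S^*,0)}$, and $(S^*,0)$ is an isolated fixed point. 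Corollary \ref{c4.1}(1) then gives ${\rm index}_W(F,(S^*,0))=0$.

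The main obstacle is purely bookkeeping: correctly translating "$\psi$ is an eigenfunction of $-d\Delta+(b-(k-c)S^*)$ with eigenvalue $\lambda_1^d(b-(k-c)S^*)$" into "$\lm>1$ is an eigenvalue of $F'(S^*,0)$", keeping the signs and the shift by $M$ straight, and confirming that enlarging $M$ (which we are free to do, as in the construction of $\mathcal O$) makes $\lm_*>1$ without affecting the cone structure; and verifying non-degeneracy of $S^*$ is invoked exactly where needed. Once the triangular structure is exploited, everything reduces to the one-dimensional eigenvalue facts recorded before Lemma \ref{l3.1}, so no genuinely new analytic difficulty arises.
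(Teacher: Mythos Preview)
Your approach is the same as the paper's: exploit the upper-triangular structure of $F'(S^*,0)$, verify \qq{4.2} has only the zero solution using $\lm_1^d(b-(k-c)S^*)\neq 0$ and the non-degeneracy $\lm_1^d(b-a+2cS^*)>0$, then produce an eigenvalue $\lm>1$ of \qq{4.3} from the $I$-block and invoke Corollary \ref{c4.1}(1).

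There is, however, one genuine slip in your eigenvalue computation rather than mere bookkeeping. You take $\psi$ to be the principal eigenfunction of $-d\Delta+(b-(k-c)S^*)$ and try to read off an eigenvalue of ${\cal A}=(M-d\Delta)^{-1}\big(M-[b-(k-c)S^*]\big)$. But $S^*$ is not constant, so $(M-d\Delta)\psi$ is \emph{not} a scalar multiple of $(M-[b-(k-c)S^*])\psi$: the two operators do not share eigenfunctions, and there is no clean formula like $\lm_*=(M-\lm_1^d)/M$. This is why your displayed formulas collapse. The paper does not compute $\lm_*$ at all; it simply observes that ${\cal A}$ is a compact strongly positive operator and that $\lm_1^d(b-(k-c)S^*)<0$ is equivalent to $r({\cal A})>1$, then takes $\phi_2>0$ to be the (Krein--Rutman) eigenfunction for $r({\cal A})$. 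No enlargement of $M$ is needed.

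A second point you glossed over: having chosen $\phi_2$ and $r=r({\cal A})>1$, you still must produce the first component $\phi_1\in X$ satisfying
\[
r(M-d\Delta)\phi_1=(a-b-2cS^*+M)\phi_1+(a-(k+c)S^*)\phi_2 .
\]
The paper checks this is uniquely solvable because $r>1$ and $\lm_1^d(b-a+2cS^*)>0$ together give $\lm_1^{rd}\big((r-1)M+b-a+2cS^*\big)>0$, so the operator on the left is invertible. Only then does $(\phi_1,\phi_2)\in\overline W_{(S^*,0)}\setminus S_{(S^*,0)}$ become an eigenfunction of \qq{4.3} with eigenvalue $r>1$.
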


\begin{proof} Take $u=(S^*, 0)$ in problems \qq{4.2} and \qq{4.3}. Then
$$F'(S^*, 0)=(M-d\Delta)^{-1}\left(\begin{array}{cc}a-b-2cS^*+M \;\;& a-(k+c)S^* \\[1mm]
 0\;\; &(k-c)S^*-b+M\end{array}\right).$$

We first prove that \qq{4.2} has only the zero solution. In fact, let $(S, I)\in \overline{W}_{(S^*,0)}=X\times K$ is a solution of \qq{4.2}. If $I\not=0$, then $\lm_1^d(b-(k-c)S^*)=0$. This is a contradiction and so $I=0$. Then $S$ satisfies
 \begin{eqnarray*}
\left\{\begin{array}{ll}
-d\Delta S+(b-a+2cS^*)S=0,\;\;&x\in\Omega,\\[1mm]
S=0,\;\;&x\in\partial\Omega.
 \end{array}\right.
 \end{eqnarray*}
Noticing that $S^*$ is the positive solution of \qq{1.4}, we have $\lm_1^d(b-a+cS^*)=0$, and then $\lm_1^d(b-a+2cS^*)>0$. Consequently, $S=0$ by the strong maximum principle.

Now we study the eigenvalue problem
  \begin{eqnarray}
 {\cal A}z:=(M-d\Delta)^{-1}(M-[b-(k-c)S^*])z=\lambda z,\;\; z\in K.
  \label{4.11}\end{eqnarray}
Since $\lm_1^d(b-(k-c)S^*)<0$, we see that $r\left({\cal A}\right)>1$ is the principal  eigenvalue of \qq{4.11}. Let $\phi_2>0$ be the eigenfunction corresponding to $r\left({\cal A}\right)$. Noticing that $r:=r\left({\cal A}\right)>1$ and $\lm_1^d(b-a+2cS^*)>0$, we have that, by the monotonicity of $\lambda_1^d(q)$ in $d$ and $q$,
 $$\lambda_1^{r d}((\lm-1)M+b-a+2cS^*)>\lambda_1^d(b-a+2cS^*)>0.$$
Consequently, the problem
  \bess\left\{\begin{array}{ll}
  r(M-d\Delta)\phi_1=(a-b-2cS^*+M)\phi_1+(a-(k+c)S^*)\phi_2,\;\;&x\in\Omega,\\
  \phi_1=0,&x\in\partial\Omega
  \end{array}\right.\eess
has a unique solution. That is, the problem
  \bess
  (M-d\Delta)^{-1}\big[(a-b-2cS^*+M)\phi_1+(a-(k+c)S^*)\phi_2\big]=r\phi_1,\;\;\phi_1\in X\eess
has a unique solution $\phi_1$. Therefore, $r=r\left({\cal A}\right)>1$ is an eigenvalue of \qq{4.3} and $\phi=(\phi_1, \phi_2)\in\overline{W}_{(S^*,0)}\setminus S_{(S^*,0)}$ is the corresponding eigenfunction. So ${\rm index}_W(F, (S^*, 0))=0$ by Corollary \ref{c4.1}.
\end{proof}

\begin{theo}\lbl{th4.2} The problem \qq{1.5} has a positive solution, denoted by  $(\tilde S, \tilde I)$, if and only if $\lm_1^d(b-a)<0$ and $\lm_1^d(b-(k-c)S^*)<0$. Moreover, the positive solution $(\tilde S, \tilde I)$ is unique and non-degenerate when it exists, and satisfies $\tilde S+\tilde I=S^*$.
\end{theo}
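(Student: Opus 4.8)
The plan is to collapse \eqref{1.5} onto two scalar logistic problems via the substitution $u=S+I$. Adding the first two equations of \eqref{1.5}, a direct computation gives $-d\Delta u=(a-b)u-cu^2$ with $u=0$ on $\partial\Omega$, i.e.\ $u$ solves \eqref{1.4}. Consequently, if \eqref{1.5} has a positive solution then \eqref{1.4} has a positive solution, forcing $\lambda_1^d(b-a)<0$, and in that case $u\equiv S^*$ by uniqueness of the positive solution of \eqref{1.4}; this already yields $\tilde S+\tilde I=S^*$. Substituting $S=S^*-I$ into the second equation of \eqref{1.5}, one finds that $I$ must solve the weighted logistic problem
\[
-d\Delta I=I\big[(k-c)S^*-b-kI\big]\ \text{ in }\Omega,\qquad I=0\ \text{ on }\partial\Omega,
\]
and, conversely, this reduction is reversible: if $\tilde I$ solves this problem, then $(\tilde S,\tilde I):=(S^*-\tilde I,\tilde I)$ satisfies \eqref{1.5}, the only remaining issue being the positivity of $\tilde S$.

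For necessity and uniqueness: a positive solution of \eqref{1.5} yields, as above, $u=S^*>0$ and a positive $I$ solving the weighted logistic problem, so by the standard logistic theory recalled for \eqref{1.4} one must have $\lambda_1^d(b-(k-c)S^*)<0$; since the weighted logistic problem then has a unique positive solution, $I$, and hence $(\tilde S,\tilde I)$, is unique. For sufficiency, assume $\lambda_1^d(b-a)<0$ and $\lambda_1^d(b-(k-c)S^*)<0$; then $S^*$ exists and the weighted logistic problem has a unique positive solution $\tilde I$. Put $\tilde S=S^*-\tilde I$. Combining the equations satisfied by $S^*$ and $\tilde I$ shows that $\tilde S$ solves
\[
-d\Delta\tilde S+(b+cS^*+k\tilde I)\tilde S=aS^*\ \text{ in }\Omega,\qquad \tilde S=0\ \text{ on }\partial\Omega .
\]
Since $b+cS^*+k\tilde I\ge b>0$ gives $\lambda_1^d(b+cS^*+k\tilde I)>0$, and $aS^*\ge0$ with $aS^*\not\equiv0$, the strong maximum principle yields $\tilde S>0$ in $\Omega$. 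Hence $(\tilde S,\tilde I)$ is a positive solution of \eqref{1.5}, which by the previous step is the unique one. (Alternatively, existence under these two conditions follows from degree theory: the only non-negative ``trivial'' solutions are then $\boldsymbol 0$ and $(S^*,0)$, whose $W$-indices vanish by \eqref{4.10} and Lemma \ref{l3.1}, while $\mathrm{deg}_W(I-F,\mathcal O)=1$ by \eqref{4.7}, so \eqref{1.5} must also carry a positive solution.)

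For non-degeneracy: being a positive solution, $(\tilde S,\tilde I)$ lies in the interior of $W$, so $\overline W_{(\tilde S,\tilde I)}=E$, and it suffices to show that the linearized problem for \eqref{1.5} at $(\tilde S,\tilde I)$ — namely $-d\Delta\phi=M_{11}\phi+M_{12}\psi$, $-d\Delta\psi=M_{21}\phi+M_{22}\psi$ in $\Omega$ with $\phi=\psi=0$ on $\partial\Omega$, where $(M_{ij})$ is the $2\times2$ Jacobian of $(f_1,f_2)$ at $(\tilde S,\tilde I)$ — has only the zero solution. Adding the two equations, both coefficients reduce to $a-b-2cS^*$, so $w:=\phi+\psi$ solves $-d\Delta w=(a-b-2cS^*)w$ with $w=0$ on $\partial\Omega$; since $S^*>0$ solves \eqref{1.4} we have $\lambda_1^d(b-a+cS^*)=0$, hence $\lambda_1^d(b-a+2cS^*)>0$ by strict monotonicity, so $w\equiv0$, i.e.\ $\phi=-\psi$. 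Substituting this into the second equation, the coefficient becomes $M_{22}-M_{21}=(k-c)S^*-b-2k\tilde I$, so $\psi$ solves the linearization of the weighted logistic problem at $\tilde I$; from $-d\Delta\tilde I-[(k-c)S^*-b-k\tilde I]\tilde I=0$ with $\tilde I>0$ we get $\lambda_1^d(k\tilde I+b-(k-c)S^*)=0$, whence $\lambda_1^d(2k\tilde I+b-(k-c)S^*)>0$ by strict monotonicity of $\lambda_1^d$ in its potential; therefore $\psi\equiv0$ and then $\phi\equiv0$, proving non-degeneracy.

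The scalar facts used — the necessary and sufficient condition for a positive solution of a logistic problem in terms of the sign of the associated principal eigenvalue, and its non-degeneracy — are standard, and so is the monotonicity of $\lambda_1^d$. The steps requiring genuine care are the two algebraic reductions: checking that $\tilde S=S^*-\tilde I$ actually satisfies the first equation of \eqref{1.5} together with its positivity, and verifying the coefficient collapses in the linearized system. These are delicate only because the quadratic coefficients $k$ and $c$ in the two prey equations differ; once they are carried out, everything reduces to the one-dimensional logistic situation.
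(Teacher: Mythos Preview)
Your proof is correct and largely parallels the paper's argument: the necessity, the identity $\tilde S+\tilde I=S^*$, the uniqueness via the scalar logistic equation for $\tilde I$, and the non-degeneracy computation (summing the linearized equations, then collapsing the $\psi$-equation to the linearized logistic problem) are essentially identical in both proofs.

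The one genuine difference is in the \emph{sufficiency} step. The paper proves existence by contradiction via fixed-point index: under the two eigenvalue assumptions, ${\rm index}_W(F,\boldsymbol 0)=0$ and ${\rm index}_W(F,(S^*,0))=0$ while ${\rm deg}_W(I-F,\mathcal O)=1$, so a positive solution must exist. You instead give a direct construction: take $\tilde I$ to be the unique positive solution of the scalar logistic problem $-d\Delta I=[(k-c)S^*-b]I-kI^2$, set $\tilde S=S^*-\tilde I$, and verify $\tilde S>0$ by writing $-d\Delta\tilde S+(b+cS^*+k\tilde I)\tilde S=aS^*$ and invoking the maximum principle for the coercive operator $-d\Delta+(b+cS^*+k\tilde I)$. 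This is more elementary and more informative, since it exhibits the positive solution explicitly rather than inferring its existence abstractly. The paper's degree-theoretic route, on the other hand, is the one that generalizes: for the full three-component problem \eqref{1.3} and for \eqref{1.6} no such direct reduction to a single scalar equation is available, and the index computations you cite parenthetically become indispensable there.
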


\begin{proof} {\it Step 1: The necessity}. Let $(\tilde S, \tilde I)$ be a positive solution of \qq{1.5}. Then $k>c$ must be true. Let $z=\tilde S+\tilde I$. Then $z$ satisfies \qq{1.4}. So $\lm_1^d(b-a)<0$, and then $z=S^*$ by the uniqueness of positive solutions of \qq{1.4}. It follows from the second equation of \qq{1.5} that $\tilde I$ satisfies
 \bess
 \left\{\begin{array}{lll}
 -d\Delta\tilde I=(k-c)\tilde S\tilde I-b\tilde I-c\tilde I^2<[(k-c)S^*-b]\tilde I,\ &x\in\Omega,\\[1mm]
   S=I=0, \ &x\in\partial\Omega.
  \end{array}\right.
 \eess
As $\tilde I>0$, it follows that $\lm_1^d(b-(k-c)S^*)<0$.

{\it Step 2: The sufficiency}. Let $\lm_1^d(b-(k-c)S^*)<0$.
On the contrary we assume that the problem \qq{1.5} has no positive solution. Then
\[1={\rm deg}_W(I-F,{\mathcal O})={\rm index}_W(F,\boldsymbol{0})+{\rm index}_W(F, (S^*, 0))=0\]
by \qq{4.7}, \qq {4.10} and Lemma \ref{l3.1}. This is a contradiction.

{\it Step 3: The uniqueness}. Assume that $(\tilde S_1, \tilde I_1)$ and $(\tilde S_2, \tilde I_2)$ are two positive solutions of \qq{1.5}. Let $z_1=\tilde S_1+\tilde I_1$ and $z_2=\tilde S_2+\tilde I_2$. Then $z_i$ satisfies \qq{1.4}. Thus, $z_1=z_2=S^*$ by the uniqueness of positive solutions of \qq{1.4}, and then $\tilde I_i$ satisfies
 \begin{eqnarray*}
\left\{\begin{array}{ll}
  -d\Delta\tilde I_i=[(k-c)S^*-b]\tilde I_i-k\tilde I_i^2,\ &x\in\Omega,\\[1mm]
  \tilde I_i=0,\;\;&x\in\partial\Omega.
 \end{array}\right.
 \end{eqnarray*}
It is well known that this problem has at most one positive solution. Consequently, $\tilde I_1=\tilde I_2$, which implies $\tilde S_1=\tilde S_2$.

{\it Step 4: The non-degeneracy}. To prove that the unique positive solution $(\tilde S, \tilde I)$ is non-degenerate, it suffices to show that the problem
\bes
 \left\{\begin{array}{lll}
 -d\Delta S+(b-a+2c\tilde S+(k+c)\tilde I)S=(a-(k+c)\tilde S)I,\;\; &x\in\Omega, \\[1mm]
 -d\Delta I+(b-(k-c)\tilde S+2c\tilde I)I=(k-c)\tilde IS,\;\; &x\in\Omega, \\[1mm]
 S,I\in X,
 \end{array}\right.
 \lbl{4.12}\ees
has only the zero solution.

It has been shown in Theorem \ref{th4.2} that $\tilde S+\tilde I=S^*$ and $\tilde I$ satisfies
  \bess
\left\{\begin{array}{ll}
  -d\Delta\tilde I+[b+k\tilde I-(k-c)S^*]\tilde I=0,\ &x\in\Omega,\\[1mm]
  \tilde I=0,\;\;&x\in\partial\Omega.
 \end{array}\right.
 \eess
Hence $\lm_1^d(b+k\tilde I-(k-c)S^*)=0$, and then
 \bes
 \lm_1^d(b+(k+c)\tilde I-(k-c)\tilde S)>\lm_1^d(b+k\tilde I-(k-c)S^*)=0.
 \lbl{4.13}\ees
Let $u=S+I$. After some straightforward manipulations it can be easily seen that $u$ satisfies
 \bess
 \left\{\begin{array}{lll}
-d\Delta u+(b-a+2cS^*)u=0,\;\; &x\in\Omega, \\[1mm]
  u=0, \ &x\in\partial\Omega.
 \end{array}\right.
 \eess
As $\lm_1^d(b-a+2cS^*)>0$, it is deduced by the strong maximum principle that $u=0$, i.e. $S=-I$.
So we have
\bess
 \left\{\begin{array}{lll}
  -d\Delta I+[b+(k+c)\tilde I-(k-c)\tilde S]I=0,\;\; &x\in\Omega, \\[1mm]
 I=0,\;\; &x\in\partial\Omega.
 \end{array}\right.
 \eess
Noticing that \qq{4.13}. It is deduced by the strong maximum principle that $I=0$, and then $S=0$.
\end{proof}

\subsection{The existence, uniqueness and non degeneracy of positive solutions of  \qq{1.6}}

In this part we study the existence, uniqueness and non degeneracy of positive solutions of \qq{1.6}.

\begin{theo}\lbl{th3.4} The problem \qq{1.6} has a positive solution, denoted by $(\hat S, \hat P)$, if and only if $\lm_1^d(b-a)<0$ and $\lm_1^D\big(\rho-\theta S^*\big)<0$. Moreover, if $N=1$, i.e., the one dimension case, then the positive solution of \qq{1.6} is unique and non-degenerate when it exists.
\end{theo}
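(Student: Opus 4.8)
The plan is to establish, in order, the necessary condition, the sufficient condition, and---for $N=1$---uniqueness together with non-degeneracy.

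\emph{Necessity.} Suppose $(\hat S,\hat P)$ is a positive solution of \qq{1.6}. Since $\hat S>0$ solves $-d\Delta\hat S+(b-a+c\hat S+\ell\hat P)\hat S=0$, it is a principal eigenfunction, so $\lambda_1^d(b-a+c\hat S+\ell\hat P)=0$; as $c\hat S+\ell\hat P>0$ in $\Omega$, strict monotonicity of $\lambda_1^d$ in the potential gives $\lambda_1^d(b-a)<0$. The first equation also shows $-d\Delta\hat S\le[(a-b)-c\hat S]\hat S$ with strict inequality somewhere, so $\hat S$ is a positive strict subsolution of \qq{1.4}; since $S^*$ is the unique, globally asymptotically stable positive solution of \qq{1.4}, this forces $\hat S\le S^*$, and then $\hat S<S^*$ in $\Omega$ by the strong maximum principle applied to $S^*-\hat S$. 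Finally $\hat P>0$ solves $-D\Delta\hat P+(\rho-\theta\hat S)\hat P=0$, so $\lambda_1^D(\rho-\theta\hat S)=0$, and $\hat S<S^*$ in $\Omega$ together with monotonicity yields $\lambda_1^D(\rho-\theta S^*)<0$.

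\emph{Sufficiency.} Assume $\lambda_1^d(b-a)<0$ and $\lambda_1^D(\rho-\theta S^*)<0$. I would work in the cone framework of Subsection 4.1 with $m=2$, $(d_1,d_2)=(d,D)$, $u=(S,P)$, $f_1(S,P)=(a-b)S-cS^2-\ell SP$, $f_2(S,P)=\theta SP-\rho P$; the a priori bound \qq{4.6} is valid for \qq{1.6} exactly as for \qq{1.3}, so ${\rm deg}_W(I-F,\mathcal O)=1$ by \qq{4.7}. Apart from positive solutions, the only nonnegative solutions of \qq{1.6} are $\boldsymbol 0$ and $(S^*,0)$ (if $S\equiv0$ the $P$-equation forces $P\equiv0$). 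At $\boldsymbol 0$, $F'(\boldsymbol 0)$ is the diagonal operator $(M+\mathscr L)^{-1}{\rm diag}(M+a-b,\,M-\rho)$, and since $\lambda_1^d(b-a)<0$ and $\lambda_0^D+\rho>0$, problem \qq{4.2} has only the zero solution while \qq{4.3} has the eigenvalue $(M+a-b)/(M+\lambda_0^d)>1$, so ${\rm index}_W(F,\boldsymbol 0)=0$ by Corollary \ref{c4.1}. At $(S^*,0)$, $F'(S^*,0)$ is the upper-triangular operator built from the matrix with first row $(M+a-b-2cS^*,\,-\ell S^*)$ and second row $(0,\,M+\theta S^*-\rho)$; this is precisely the structure of Lemma \ref{l3.1}, with the potential $\rho-\theta S^*$ in the role of $b-(k-c)S^*$, so---using $\lambda_1^D(\rho-\theta S^*)<0$ and $\lambda_1^d(b-a+2cS^*)>\lambda_1^d(b-a+cS^*)=0$---the same computation shows \qq{4.2} has only the zero solution and that the principal eigenvalue $r>1$ of $z\mapsto(M-D\Delta)^{-1}(M-\rho+\theta S^*)z$ on $K$ lifts to an eigenvalue of \qq{4.3} with eigenfunction in $\overline W_{(S^*,0)}\setminus S_{(S^*,0)}$, giving ${\rm index}_W(F,(S^*,0))=0$. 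If \qq{1.6} had no positive solution we would obtain $1={\rm deg}_W(I-F,\mathcal O)={\rm index}_W(F,\boldsymbol 0)+{\rm index}_W(F,(S^*,0))=0$, a contradiction.

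\emph{Uniqueness and non-degeneracy when $N=1$.} The hard part will be non-degeneracy: I must show that for every positive solution $(\hat S,\hat P)$ the linearized system
\[-d\varphi''=(a-b-2c\hat S-\ell\hat P)\varphi-\ell\hat S\psi,\qquad -D\psi''=\theta\hat P\varphi+(\theta\hat S-\rho)\psi,\qquad \varphi(0)=\varphi(L)=\psi(0)=\psi(L)=0,\]
has only the trivial solution. Testing the first equation against $\hat S$ and the second against $\hat P$ and integrating by parts (using that $\hat S$, $\hat P$ are the principal eigenfunctions, with eigenvalue $0$, of $-d\partial_{xx}-(a-b-c\hat S-\ell\hat P)$ and $-D\partial_{xx}-(\theta\hat S-\rho)$) yields the orthogonality relations $c\int\hat S^2\varphi+\ell\int\hat S^2\psi=0$ and $\int\hat P^2\varphi=0$; one then exploits that in one space dimension these two operators have simple eigenvalues and are bounded below on $\{\hat S\}^\perp$ and $\{\hat P\}^\perp$ by their positive second eigenvalues, together with Sturm oscillation to control the nodal structure of the functions involved, to force $(\varphi,\psi)\equiv(0,0)$. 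This step is exactly where the hypothesis $N=1$ is essential and is the main obstacle of the theorem. Once non-degeneracy holds at every positive solution, each such solution is an isolated fixed point of $F$ lying in the interior of the cone, so ${\rm index}_W(F,(\hat S,\hat P))={\rm index}_E(F,(\hat S,\hat P))=(-1)^{\beta}$, where $\beta$ is the number (with multiplicity) of eigenvalues of $F'$ exceeding $1$; a companion one-dimensional argument shows that the linearized elliptic operator has no negative real eigenvalue, i.e.\ $\beta=0$, so every positive solution contributes $+1$. Since the indices of all fixed points of $F$ in $\mathcal O$ add up to ${\rm deg}_W(I-F,\mathcal O)=1$, while $\boldsymbol 0$ and $(S^*,0)$ contribute $0$, there is exactly one positive solution.
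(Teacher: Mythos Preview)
Your necessity and sufficiency arguments are fine; in fact they are more explicit than the paper, which simply refers to \cite{Li2} for both directions. The degree-theoretic sufficiency argument you give is exactly in the spirit of the paper's framework.

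The genuine gap is in the $N=1$ part. You propose to prove non-degeneracy first and then deduce uniqueness by index counting, but neither step is really carried out. The two orthogonality relations $c\int\hat S^2\varphi+\ell\int\hat S^2\psi=0$ and $\int\hat P^2\varphi=0$ are correct but by themselves do not force $(\varphi,\psi)=(0,0)$; you invoke ``Sturm oscillation to control the nodal structure'' without saying which Sturm comparison is being applied or to which function, and for a genuinely coupled non-self-adjoint $2\times2$ system this is the entire content of the result, not a routine step. The second assertion, that ``the linearized elliptic operator has no negative real eigenvalue, i.e.\ $\beta=0$'', is also unsupported: the linearization of a predator--prey system is non-cooperative and non-self-adjoint, its eigenvalues need not be real, and knowing non-degeneracy (zero is not an eigenvalue) says nothing about the parity of the number of eigenvalues of $F'$ exceeding $1$. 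Without $\beta=0$ the index at a positive solution could be $-1$, and then the additivity formula no longer forces uniqueness.

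The paper takes the opposite route and avoids the index computation altogether. It assumes two positive solutions $(S_1,P_1)$, $(S_2,P_2)$ and writes the system satisfied by the difference $(S,P)=(S_1-S_2,P_1-P_2)$; this is a linear non-cooperative system whose coefficients involve \emph{both} solutions. One then argues directly, via the L\'opez-G\'omez--Pardo one-dimensional technique \cite{LP}: show that $P$ must change sign, show its zero set is discrete, and track the sign of $S$ across consecutive zeros of $P$ to reach a contradiction at the right endpoint. This gives uniqueness outright. Non-degeneracy then comes for free, because when $(S_1,P_1)=(S_2,P_2)=(\hat S,\hat P)$ the difference system specializes exactly to the linearized system, which therefore also has only the trivial solution. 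If you want to salvage your order of argument, you would need to supply a genuine proof that the linearized system has only the zero solution---and the natural way to do that is precisely the sign-tracking argument of \cite{LP}, at which point uniqueness follows by the same mechanism without ever computing an index.
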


\begin{proof} The proof of the sufficiency and necessity can refer to \cite{Li2}. In the following we prove the uniqueness and non-degeneracy of positive solutions. Without loss of generality we assume that $\oo=(0,l)$.

{\it Step 1: The uniqueness}. Let $(S_1,P_1)$ and $(S_2,P_2)$ be two positive solutions of \qq{1.6}, and set $S=S_1-S_2$, $P=P_1-P_2$. Then $(S,P)$ satisfies
\bes
\left\{\begin{array}{ll}
	-d\Delta S+(b-a+cS_1+cS_2+\ell P_1)S=-\ell S_2P,\;\;&x\in(0,\,l),\\[1mm]
	-D\Delta P+(\rho-\theta S_1)P=\theta P_2S,\;\;&x\in(0,\,l),\\[1mm]
	S=P=0,\;\;&x=0,\,l.
\end{array}\right.
\lbl{4.14a}\ees
We shall prove $S=P=0$. Assume on the contrary that either $S\not=0$ or $P\not=0$. Then $S\not=0$ and $P\not=0$. In deed, $P=0$ implies $S=0$, and $S=0$ implies $P=0$ by \qq{4.14a} since $\ell S_2>0$ and $\theta P_2>0$.

We first show that $P$ must change signs. In fact, as both $(S_1, P_1)$ and $(S_2, P_2)$ are positive solutions of \qq{1.6}, we have that
\bess
\lm_1^d(b-a+cS_1+cS_2+\ell P_1)&>&\lm_1^d(b-a+cS_1+\ell P_1)=0,\\
\lm_1^D(\rho-\theta S_1)&=&0.\eess
Hence, the strong maximum principle holds for $S$. If $P\le 0$, then $S>0$ in $(0,\,l)$ by the strong maximum principle as $P\not=0$. According to $S, P\in X$, we have that $S, P\in C^1([0,\,l])$, and then $S_x(0)>0$, $S_x(l)<0$ by the Hopf boundary lemma. Thus, there exists $\varepsilon>0$ such that $S>-\varepsilon P$ in $(0,\,l)$ (cf. \cite[Lemma 2.2.1]{WangEllip}). It follows from the second equation of \qq{4.14a} that
\bess
\left\{\begin{array}{lll}
	-D\Delta P+(\rho-\theta S_1)P=\theta  P_2 S>-\varepsilon\theta P_2 P,\;\; &x\in\Omega, \\[1mm]
	P=0,\;\; &x\in\partial\Omega.
\end{array}\right.
\eess
As
\[\lm_1^D(\rho-\theta S_1+\varepsilon\theta P_2)>\lm_1^D(\rho-\theta S^*)=0,\]
the strong maximum principle shows that $P>0$ in $(0,\,l)$. This contradicts with the assumption that $P\le 0$. Similarly, $P\le 0$ is also impossible. So, $P$ must change signs.

For the open interval ${\cal I}\subset(0,\,l)$, we denote
 \bess
({\cal L}_1, {\cal I})&=&-d\Delta +b-a+cS_1+cS_2+\ell P_1,\\[1mm]
({\cal L}_2, {\cal I})&=&-D\Delta+(\rho-\theta S_1),\\[1mm]
{\cal D}({\cal L}_i, {\cal I})&=&H^2({\cal I})\cap H_0^1({\cal I}),
\eess
and let $\lm_1({\cal L}_i, {\cal I})$ be the principal eigenvalue of the following eigenvalue problem
\bess\left\{\begin{array}{ll}
	{\cal L}_i\phi=\lm\phi,\;\;&x\in {\cal I},\\[1mm]
	\phi=0,\;\;&x\in\partial {\cal I},
\end{array}\right.
\eess
$i=1,2$. Noticing that $(S_1, P_1)$ is a positive solution of \qq{1.6} and $S_2>0$. We have that
\bess
&\lm_1({\cal L}_1,(0,\,l))>\lm_1^d(b-a+cS_1+\ell P_1)=0,&\\
&\lm_1({\cal L}_2,(0,\,l))=\lm_1^D(\rho-\theta S_1)=0,&\\
&\lm_1({\cal L}_i,{\cal I})>\lm_1({\cal L}_i,(0,\,l))\;\;\text{if}\;\;{\cal I}\varsubsetneqq(0,\,l),\;\;i=1,2.&
\eess
Therefore, the strong maximum principle holds for $({\cal L}_1, {\cal I})$ when ${\cal I}\subset(0,\,l)$, and for $({\cal L}_2, {\cal I})$ when ${\cal I}\varsubsetneqq(0,\,l)$.

Take advantage of the above facts, similar to the proof of \cite[Lemmas 3.3 and 3.4]{LP} we can prove the following claims.

{\bf Claim 1}: The set of zeros of $P$ is discrete in $[0,\,l]$. Moreover, if $w(x)=0$ for some $x\in(0,l)$, then $w$ must change signs in the neighborhood of $x$.

{\bf Claim 2}: Let $[z_1,z_2]\varsubsetneqq[0,\,l]$ with $z_1<z_2$, and  $P(z_1)=P(z_2)=0$. If $P>0$ in $(z_1, z_2)$ and $S(z_1)\le 0$, then $S(z_2)>0$; If $P<0$ in $(z_1, z_2)$ and $S(z_1)\ge 0$, then $S(z_2)<0$.

According to the above Claim 1 we see that the set of zeros of $P$ where it changes sign is discrete in $[0,\,l]$. Let $0=z_0<z_1<\cdots<z_n=l$ be such a set. Then $\lm_1({\cal L}_1, (z_{j-1}, z_j))>0$, $\lm_1({\cal L}_2, (z_{j-1}, z_j))>0$ for $j=1,\cdots, n$. Using the above Claim 2 and repeating the proof of \cite[Theorem 3.1]{LP} we can derive a contradiction that either $S(l)>0$ or $S(l)<0$. This contradiction implies that $S=P=0$.

{\it Step 2: The non-degeneracy}. We have proved in Step 1 that the problem \qq{1.6} has a unique positive solution $(\hat S,\hat P)$. Then \qq{4.14a} becomes exactly the following problem
\bes
\left\{\begin{array}{lll}
	-d\Delta S+(b-a+2c\hat S+\ell\hat P)S=-\ell\hat S P,\;\; &x\in(0,l), \\[1mm]
	-D\Delta P+(\rho-\theta\hat S)P=\theta\hat P S,\;\; &x\in(0,l), \\[1mm]
    S=P=0,\;\;&x=0,\,l.
\end{array}\right.
\lbl{4.15a}\ees
Moreover, we have shown that the problem \qq{4.14a} only has the zero solution in Step 1,  so does \eqref{4.15a}. This shows that the solution $(\hat S, \hat P)$ is non-degenerate.
\end{proof}

\subsection{The existence and uniqueness of positive solutions of the problem \qq{1.3}}

In the above we have known that the problem \qq{1.4} has a positive solution $S^*$ if and only if $\lm_1^d(b-a)<0$, and $S^*$ is unique and non-degenerate when it exists; the problem \qq{1.5} has a positive solution $(\tilde S, \tilde I)$ if and only if $\lm_1^d(b-(k-c)S^*)<0$, and $(\tilde S, \tilde I)$ is unique and non-degenerate when it exists (Theorem \ref{th4.2}); the problem \qq{1.6} has a positive solution $(\hat S,\hat P)$ if and only if $\lm_1^D(\rho-\theta S^*)<0$, and $(\hat S,\hat P)$ is unique and non-degenerate when it exists and $N=1$ (Theorem \ref{th3.4}).

The following theorem concerns with positive solutions of the problem \qq{1.3}.

\begin{theo}\lbl{th3.5} Let $N=1$. Then the problem \qq{1.3} has a positive solution if and only if
 \bes
 \lambda^d_1(b-a)<0,\;\;\lm_1^d(b-(k-c)\hat S+\ell\hat P)<0, \;\;\lm_1^D(\rho-\theta S^*)<0.
 \lbl{4.16}\ees
Moreover, the positive solution $(S, I, P)$ is unique when it exists, and $S+I=\hat S$, $P=\hat P$, where $(\hat S, \hat P)$ is the unique positive solution of \qq{1.6}.
\end{theo}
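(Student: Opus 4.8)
The key observation is that adding the first two equations of \qq{1.3} kills all the $\pm k SI$ terms and collapses the $S$–$I$ block: if $(S,I,P)$ is a positive solution, then $u:=S+I$ satisfies
\bess
\left\{\begin{array}{ll}
-d\Delta u=(a-b)u-cu^2-\ell uP,\;\;&x\in\oo,\\[1mm]
-D\Delta P=\theta uP-\rho P,\;\;&x\in\oo,\\[1mm]
u=P=0,\;\;&x\in\partial\oo,
\end{array}\right.
\eess
which is precisely \qq{1.6}. Since $S,I>0$ forces $u>0$, and $P>0$, Theorem \ref{th3.4} applies: \qq{1.6} has a positive solution iff $\lm_1^d(b-a)<0$ and $\lm_1^D(\rho-\theta S^*)<0$, and in the one-dimensional case that solution is unique. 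Hence $u=\hat S$ and $P=\hat P$ are forced, which already gives the "$S+I=\hat S$, $P=\hat P$" part of the conclusion and two of the three conditions in \qq{4.16}.

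**Recovering $I$ and the third condition.**
With $P=\hat P$ fixed and $S=\hat S-I$, the second equation of \qq{1.3} becomes a scalar logistic-type equation for $I$ alone:
\bess
\left\{\begin{array}{ll}
-d\Delta I=\big[(k-c)\hat S-b-\ell\hat P\big]I-kI^2,\;\;&x\in\oo,\\[1mm]
I=0,\;\;&x\in\partial\oo,
\end{array}\right.
\eess
(using $(k-c)(\hat S-I)-b-\ell\hat P-cI=(k-c)\hat S-b-\ell\hat P-kI$). This is the standard Dirichlet logistic problem, so it has a positive solution iff $\lm_1^d\big(b+\ell\hat P-(k-c)\hat S\big)<0$, and it is unique when it exists; this is the remaining condition in \qq{4.16}. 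For the converse, given \qq{4.16} one defines $(\hat S,\hat P)$ via Theorem \ref{th3.4}, solves the logistic problem above for $I$, sets $S=\hat S-I$, and must check $S>0$: since $I$ satisfies $-d\Delta I+\big[b+\ell\hat P-(k-c)\hat S+kI\big]I=0$ one gets $\lm_1^d\big(b+\ell\hat P-(k-c)\hat S+kI\big)=0$, while $\hat S$ satisfies $-d\Delta\hat S+\big[b-a+c\hat S+\ell\hat P\big]\hat S=0$; writing the equation for $S=\hat S-I$ and comparing principal eigenvalues should force $S>0$ by the strong maximum principle. One then verifies directly that $(S,I,\hat P)$ solves all of \qq{1.3}, using $S+I=\hat S$ throughout.

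**The main obstacle.**
The genuinely delicate point is positivity of $S=\hat S-I$, i.e.\ showing the logistic solution $I$ stays strictly below $\hat S$. The equation satisfied by $w:=S=\hat S-I$ is, after subtracting the two scalar equations and using $S+I=\hat S$,
\bess
-d\Delta w+\big[b-a+c\hat S+\ell\hat P+(k-c)I\big]w
=\big[(k-c)S^{?}\cdots\big],
\eess
so one must arrange the lower-order coefficient and right-hand side so that the strong maximum principle (valid because $\lm_1^d$ of the relevant potential is positive, which follows by monotonicity of $\lm_1^d$ from the two "zero eigenvalue" identities for $\hat S$ and $I$) yields $w>0$. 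This is the same flavor of argument as Step 4 in the proof of Theorem \ref{th4.2} and as the uniqueness argument in Theorem \ref{th3.4}, and I expect it to go through by carefully tracking which potential dominates which; the rest is bookkeeping. Uniqueness of the full triple is then immediate: $(S+I,P)$ must equal $(\hat S,\hat P)$ by uniqueness in \qq{1.6}, and then $I$ is the unique positive solution of the scalar logistic problem, so $S$ is determined as well.
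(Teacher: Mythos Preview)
Your necessity and uniqueness arguments coincide with the paper's: both observe that $(S+I,P)$ solves \qq{1.6}, invoke Theorem~\ref{th3.4} (here $N=1$ is essential for uniqueness) to pin down $S+I=\hat S$, $P=\hat P$, and then reduce the $I$-equation to the scalar logistic problem $-d\Delta I=[(k-c)\hat S-b-\ell\hat P]I-kI^2$, from which the middle condition of \qq{4.16} and uniqueness of $I$ follow.

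For sufficiency, however, your route is genuinely different and considerably more elementary. The paper proves existence by topological degree: it computes ${\rm index}_W(F,\cdot)=0$ at each of $\boldsymbol 0$, $(S^*,0,0)$, $(\tilde S,\tilde I,0)$, $(\hat S,0,\hat P)$ (the last two requiring the full non-degeneracy statements of Theorems~\ref{th4.2} and~\ref{th3.4}), and contrasts this with ${\rm deg}_W(I-F,\mathcal O)=1$. Your direct construction bypasses all of this machinery. It does work, and the ``main obstacle'' you flag has a clean resolution: rather than subtracting the $\hat S$- and $I$-equations, write the \emph{first} equation of \qq{1.3} for $S=\hat S-I$ using $S+I=\hat S$, $P=\hat P$. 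A short computation gives
\[
-d\Delta S+\big[b-a+c\hat S+kI+\ell\hat P\big]S=aI\quad\text{in }\Omega,\qquad S|_{\partial\Omega}=0.
\]
Since $\hat S$ is a positive solution of \qq{1.6} one has $\lm_1^d(b-a+c\hat S+\ell\hat P)=0$, and $kI>0$ in $\Omega$ forces $\lm_1^d(b-a+c\hat S+kI+\ell\hat P)>0$ by strict monotonicity. The strong maximum principle then yields $S>0$ from $aI>0$. This replaces the incomplete display in your write-up and closes the argument; the remaining verification that $(S,I,\hat P)$ solves all of \qq{1.3} is automatic from $S+I=\hat S$.

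What each approach buys: the paper's degree computation is heavier but is the natural continuation of the framework set up in \S4.1--4.3, and its index calculations at the semi-trivial solutions are of independent interest for bifurcation questions. Your construction is shorter, avoids the index machinery entirely, and makes the structure $S+I=\hat S$, $P=\hat P$ transparent from the start; it also shows that the existence half of the theorem does not actually require $N=1$ beyond having a fixed $(\hat S,\hat P)$ to refer to.
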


\begin{proof} Let's prove the necessity and uniqueness first. Let $(S, I, P)$ be a positive solution of \qq{1.3}. Set $u=S+I$. After some straightforward manipulations it can be easily seen that $(u, P)$ satisfies \qq{1.6} with $(S, P)$ replaced by $(u, P)$. Thus $\lm_1^d(b-a)<0$ and $S+I=u<S^*$. Moreover, $(u, P)=(\hat S, \hat P)$ by the uniqueness of positive solutions of \qq{1.6}. Hence, $S<\hat S$. It is easy to see that $k>c$ by the equation of $I$. Thus, by the equations  of $I$ and $P$ in \qq{1.3}, we have that
\bess
 \left\{\begin{array}{lll}
 -d\Delta I=(k-c)SI-bI-cI^2-\ell\hat P I<[(k-c)\hat S-b-\ell \hat P]I,\ &x\in\Omega,\\[1mm]
 -D\Delta P=\theta (S+I)P-\rho P<(\theta S^*-\rho)P,\ &x\in\Omega,\\[1mm]
  I=P=0, \ &x\in\partial\Omega.
 \end{array}\right.
 \eess
Since $I, P>0$, it follows that the last two inequalities in \qq{4.16} hold.

Let $(S_i, I_i, P_i)$, $i=1,2$, be two positive solutions of \qq{1.3}. The above analysis shows that $S_i+I_i=\hat S$ and $P_i=\hat P$. Thus, $I_i$ satisfies
   \begin{eqnarray*}
\left\{\begin{array}{ll}
  -d\Delta I_i=[(k-c)\hat S-b-\ell\hat P] I_i-k I_i^2,\ &x\in\Omega,\\[1mm]
  I_i=0,\;\;&x\in\partial\Omega.
 \end{array}\right.
 \end{eqnarray*}
It is well known that this problem has at most one positive solution. So, $I_1=I_2$, and then $S_1=S_2$. The uniqueness is proved.

In the following we shall prove the sufficiency. Assume that \qq{4.16} hold.
Take $u=(S, I, P)$, $d_1=d_2=d$, $d_3=D$ and $(f_1, f_2, f_3)$ as in \qq{4.9}
The direct calculation gives
\bess
&&(\mathscr{L}+M)F'(S, I, P)\\[1mm]
&=&\left(\begin{array}{cccc}
	a-b-2cS-(c+k)I-\ell P+M  &  a-(c+k)S  &  -\ell S\\[2mm]
	(k-c)I & (k-c)S-b-2cI-\ell P+M  & -\ell I\\[2mm]
	\theta P & \theta P & \theta(S+I)-\rho+M
\end{array}\right).
\eess

{\it Step 1}. Similar to \S\ref{s3.1.1} we have
 \bes
  {\rm index}_W(F,\boldsymbol{0})=0.
 \lbl{4.17}\ees

{\it Step 2: The calculation of ${\rm index}_W(F, (S^*, 0,0))$}. By the direct calculation,
 \[\overline{W}_{(S^*,0,0)}=X\times K\times K,\;\;S_{(S^*,0,0)}=X\times\{(0,0)\}.\]
Take $u=(S^*, 0,0)$ in problems \qq{4.2} and \qq{4.3}. Notice that $\lm_1^d(b-(k-c)S^*)<\lm_1^d(b-(k-c)\hat S+\ell\hat P)<0$ and $\lm_1^D(\rho-\theta S^*)<0$. Similar to the proof of Lemma \ref{l3.1} we can prove that the problem \qq{4.2} has only the zero solution.

Consider the eigenvalue problem
\begin{eqnarray}
{\cal B}z:=(M-D\Delta)^{-1}(\theta S^*-\rho+M)z=\lambda z,\;\; z\in K.
  \label{4.18}
\end{eqnarray}
Since $\lm_1^D(\rho-\theta S^*)<0$, we see that $r\left({\cal B}\right)>1$ is an eigenvalue of \qq{4.18}. Let $\phi_3>0$ be the eigenfunction corresponding to $r\left({\cal B}\right)$.
Similar to the proof of Lemma \ref{l3.1}, we can prove that the problem
  \bess
  (M-d\Delta)^{-1}\big[(a-b-2cS^*+M)\phi_1-\ell S^*\phi_3\big]=r\left({\cal B}\right)\phi_1,\;\;\phi_1\in X\eess
has a unique solution $\phi_1$. Therefore, $r\left({\cal B}\right)>1$ is an eigenvalue of \qq{4.3} and $\phi=(\phi_1, 0, \phi_3)\in\overline{W}_{(S^*,0,0)}\setminus S_{(S^*,0,0)}$ is the corresponding eigenfunction. So, by Corollary \ref{c4.1},
 \bes
 {\rm index}_W(F, (S^*, 0,0))=0.
 \lbl{4.19}\ees

{\it Step 3: The calculation of ${\rm index}_W(F, (\tilde S, \tilde I,0))$}. Similar to \S\ref{s3.1.1} we have
 \[\overline{W}_{(\tilde S, \tilde I,0)}=X\times X\times K,\;\;S_{(\tilde S, \tilde I,0)}=X\times X\times\{0\}.\]
Take $u=(\tilde S, \tilde I,0)$ in problems \qq{4.2} and \qq{4.3}. Let
$\phi=(\phi_1,\phi_2,\phi_3)\in\overline{W}_{(\tilde S, \tilde I,0)}$ be a solution of \qq{4.2}. Noticing that $\tilde S+\tilde I=S^*$ and $\lm_1^D(\rho-\theta S^*)<0$, similar to the proof of Lemma \ref{l3.1} we have $\phi_3=0$, and then $(\phi_1,\phi_2)$ is a solution of the problem \qq{4.12}. In Theorem \ref{th4.2} we have shown that \qq{4.12} has only the zero solution. So, $\phi_1=\phi_2=0$, and then the problem \qq{4.2} has only the zero solution.

It has been known that $r:=r\left({\cal B}\right)>1$ is an eigenvalue of \qq{4.18}. Let
 \bess
 \mathscr{K}=\frac 1r(M-d\Delta)^{-1}
 \left(\begin{array}{lc}M-b+a-2c\tilde S-(k+c)\tilde I&a-(k+c)\tilde S\\[1mm]
 (k-c)\tilde I&M-b+(k-c)\tilde S-2c\tilde I
 \end{array}\right).\eess
Then $\mathscr{K}$ is a compact operator. We first prove that the problem
\bess
  \left(\begin{array}{lll}
 \phi_1\\
 \phi_2\end{array}\right)-\mathscr{K}\left(\begin{array}{lll}
 \phi_1\\
 \phi_2\end{array}\right)=0
 \eess
has only the zero solution in $[H_0^1(\oo)]^2$. In fact, if $(\phi_1, \phi_2)\in [H_0^1(\oo)]^2$ is a solution of \qq{4.20}, then $\phi_i\in C^2(\oo)\cap C^1(\bar\oo)$ by the regularity theory. Thus, $(\phi_1, \phi_2)$ satisfies
 \bes
 \left\{\begin{array}{lll}
 -d\Delta\phi_1+M\phi_1=r^{-1}\big[(M-b+a-2c\tilde S-(k+c)\tilde I)\phi_1+(a-(k+c)\tilde S)\phi_2\big],\; &x\in\Omega, \\[2mm]
 -d\Delta\phi_2+M\phi_2=r^{-1}\big[(k-c)\tilde I\phi_1+(M-b+(k-c)\tilde S-2c\tilde I)\phi_2\big],\; &x\in\Omega, \\[1.5mm]
\phi_1=\phi_2=0,&x\in\partial\Omega.
 \end{array}\right.
 \lbl{4.20}\ees
Let $u=\phi_1+\phi_2$. Then $u$ satisfies
 \bess
 \left\{\begin{array}{lll}
 -d\Delta u+\dd\left[M-r^{-1}(M-b+a-2c S^*)\right]u=0,\;\; &x\in\Omega, \\[1.5mm]
 u=0,&x\in\partial\Omega.
 \end{array}\right.
 \eess
Since $r>1$ and $M-b+a-2c S^*>0$ in $\oo$, by the monotonicity of $\lm_1^d(q)$ in $q$ we have
 \bess
 \lm_1^d\big([M-r^{-1}(M-b+a-2c S^*)]\big)>\lm_1^d(b-a+2c S^*)>0.\eess
The maximum principle implies $u=0$, i.e., $\phi_1=-\phi_2$. Then, by the second equation of \qq{4.20}, we have
 \bess
 \left\{\begin{array}{lll}
  -d\Delta\phi_2+M\phi_2+r^{-1}\big[(b-(k-c)\tilde S+2c\tilde I-M)\phi_2+(k-c)\tilde I\phi_2\big]=0,\;\; &x\in\Omega, \\[1.5mm]
 \phi_2=0,&x\in\partial\Omega.
 \end{array}\right.\qquad
 \eess
Thanks to $\tilde S+\tilde I=S^*$, it follows that
 $$b-(k-c)\tilde S+2c\tilde I-M+(k-c)\tilde I=b-(k-c)S^*+2k\tilde I-M.$$
Hence, $\phi_2$ satisfies
 \bess
 \left\{\begin{array}{lll}
  -d\Delta\phi_2+M\phi_2+r^{-1}\big[b-(k-c)S^*+2k\tilde I-M\big]\phi_2=0,\;\; &x\in\Omega, \\[1.5mm]
 \phi_2=0,&x\in\partial\Omega.
 \end{array}\right.\qquad
 \eess
Similar to the above,
 \bess
 \lm_1^d\big([M+r^{-1}(b-(k-c)S^*+2k\tilde I-M)]\big)>\lambda_1^d(b-(k-c)S^*+k\tilde{I})=0.\eess
The maximum principle asserts $\phi_2=0$. Hence the problem \qq{4.20} has only the zero solution. Let $\phi_3>0$ be the eigenfunction corresponding to $r\left({\cal B}\right)$. Making use of the Fredholm alternative theorem we have that the problem
  \bess
  \left(\begin{array}{lll}
 \phi_1\\
 \phi_2\end{array}\right)-\mathscr{K}\left(\begin{array}{lll}
 \phi_1\\
 \phi_2\end{array}\right)=-\frac 1{r\left({\cal B}\right)}(M-d\Delta)^{-1}\left(\begin{array}{lll}
 \ell\tilde S\phi_3\\
 \ell\tilde I\phi_3\end{array}\right)
 \eess
has a unique solution $(\phi_1, \phi_2)\in[H_0^1(\oo)]^2$. And then $\phi_i\in C^2(\oo)\cap C^1(\bar\oo)$ by the regularity theory. It is easy to verify that $r\left({\cal B}\right)>1$ is an eigenvalue of \qq{4.3} and $\phi=(\phi_1, \phi_2, \phi_3)\in\overline{W}_{(\tilde S, \tilde I,0)}\setminus S_{(\tilde S, \tilde I,0)}$ is the corresponding eigenfunction. Consequently, by Corollary \ref{c4.1},
 \bes
 {\rm index}_W(F, (\tilde S, \tilde I,0))=0.
 \lbl{4.21}\ees

{\it Step 4: The calculation of ${\rm index}_W(F, (\hat S,0,\hat P))$}.
Similar to the above
 \[\overline{W}_{(\hat S,0,\hat P)}=X\times K\times X,\;\;
  S_{(\hat S,0,\hat P)}=X\times\{0\}\times X.\]
Take $u=(\hat S,0,\hat P)$ in problems \qq{4.2} and \qq{4.3}. Noticing that $\lm_1^d(b-(k-c)\hat S+\ell\hat P)<0$ and the problem \qq{4.15a}
has only the zero solution (Theorem \ref{th3.4}). Similar to Step 3 we can show that the problem \qq{4.2} has only the zero solution, and the eigenvalue problem \qq{4.3} has an eigenvalue $\lm>1$. Using Corollary \ref{c4.1} we have
 \bes
 {\rm index}_W(F, (\hat S,0,\hat P))=0.
 \lbl{4.22}\ees

{\it Step 5}. We have known that ${\rm deg}_W(I-F,{\mathcal O})=1$ by \qq{4.7}, ${\rm index}_W(F,\boldsymbol{0})=0$ by \qq{4.17}, ${\rm index}_W(F, (S^*, 0,0))=0$ by
 \qq{4.19}, ${\rm index}_W(F, (\tilde S, \tilde I,0))=0$ by \qq{4.21} and ${\rm index}_W(F, (\hat S,0,\hat P))=0$ by \qq{4.22}. As
  \bess
  &&{\rm index}_W(F,\boldsymbol{0})+{\rm index}_W(F, (S^*, 0,0))+{\rm index}_W(F, (\tilde S, \tilde I,0))+{\rm index}_W(F, (\hat S,0,\hat P))\\
  &\not=&{\rm deg}_W(I-F,{\mathcal O}),\eess
the problem \eqref{1.3} has at least one positive solution.
\end{proof}

\subsection{Stabilities of non-negative trivial equilibrium solutions of \qq{1.2}}

In the above subsections we obtained the nonnegative solutions of \qq{1.3}:
 \bess
 (0,0,0),\;\;(S^*,0,0),\;\;(\hat S,0,\hat P),\;\;(\tilde S, \tilde I,0),\;\;(S_*, I_*, P_*). \eess
They correspond to the nonnegative constant equilibrium solutions of \qq{3.1}:
 \bess
 &\dd(0,0,0),\;\;\kk(\frac{a-b}c,0,0\rr),\;\;\kk(\frac\rho\theta,\,0,\,\frac {a-b-c\rho/\theta}\ell\rr),&\\[2mm]
 &\dd\kk(\frac ak,\,\frac{a(k-c)-bk}{kc},\,0\rr),\;\; \kk(\frac ak,\,\frac{k\rho-a\theta}{k\theta},\,\frac{a-b-c\rho/\theta}\ell\rr).&
  \eess
In the following we study the global asymptotic stabilities of $(0,0,0)$, $(S^*,0,0)$ and $(\tilde S, \tilde I,0)$.

\begin{theo}\lbl{t3.6} Assume that $(\gamma, \sigma)=(\ell,\theta)$. Let $(S,I,P)$ be the unique solution of \qq{1.2}.

{\rm (i)}\, If $\lm^d_0(b-a)\ge 0$, then
 \bess
 \lim_{t\to\infty}(S,I,P)=(0,0,0) \;\;\;\text{in}\;\;C^2(\bar\oo).
  \eess

{\rm(ii)} Assume that $\lm^d_0(b-a)<0$ and $\lm_1^D(\rho-\theta S^*)>0$. If either $k\le c$, or $k>c$ and $\lm_1^d(b-(k-c)S^*)>0$, then
 \bess
 \lim_{t\to\infty}(S,I,P)=(S^*,0,0) \;\;\;\text{in}\;\;C^2(\bar\oo),
  \eess
where $S^*$ is the unique positive solution of \qq{1.4}.

{\rm(iii)} Assume that $\lm^d_0(b-a)<0$. If $\lm_1^d(b-(k-c)S^*)<0$ {\rm(}this implies $k>c${\rm)} and $\lm_1^D(\rho-\theta S^*)>0$, then
 \bess
 \lim_{t\to\infty}(S,I,P)
 =(\tilde S,\tilde I,0) \;\;\;\text{in}\;\;C^2(\bar\oo),
  \eess
where $(\tilde S, \tilde I)$ is the unique positive solution of \qq{1.5}.
 \end{theo}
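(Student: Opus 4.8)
The analysis rests on a reduction available precisely because $(\gamma,\sigma)=(\ell,\theta)$ and, in \qq{1.2}, both $S$ and $I$ carry the diffusion coefficient $d$: the function $u:=S+I$, together with $P$, solves the classical diffusive prey--predator problem
\[
\left\{\begin{array}{ll}
u_t-d\Delta u=(a-b)u-cu^2-\ell uP,&x\in\oo,\ t>0,\\[1mm]
P_t-D\Delta P=\theta uP-\rho P,&x\in\oo,\ t>0,\\[1mm]
u=P=0,&x\in\partial\oo,\ t>0,\\[1mm]
u(x,0)=S_0(x)+I_0(x)>0,\ P(x,0)=P_0(x)>0,&x\in\oo,
\end{array}\right.
\]
while, substituting $S=u-I$ into the second equation of \qq{1.2}, $I$ satisfies the scalar problem
\[
I_t-d\Delta I=\big[(k-c)u-b-\ell P-kI\big]I\quad\text{in }\oo\times(0,\infty),\qquad I|_{\partial\oo}=0.
\]
The plan is: (a) determine $\lim_{t\to\infty}(u,P)$ from the prey--predator problem; (b) feed this into the scalar $I$-equation and obtain $\lim_{t\to\infty}I$ by comparison; (c) put $S=u-I$; (d) upgrade the resulting uniform convergence to convergence in $C^2(\boo)$ by means of the a priori bound \qq{2.3} and the compactness of $C^{2+\alpha}(\boo)\hookrightarrow C^2(\boo)$.

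Step (a) would be carried out by the standard iterated upper--lower solution scheme. Discarding the term $-\ell uP\le0$ gives $u_t-d\Delta u\le(a-b)u-cu^2$; comparing with the scalar logistic equation under the Dirichlet condition (which is globally attracted to the solution $S^*$ of \qq{1.4} when $\lm_1^d(b-a)<0$ and to $0$ when $\lm_1^d(b-a)\ge0$) yields $u\to0$ uniformly in case (i), and $\limsup_{t\to\infty}\max_{\boo}(u-S^*)\le0$ in cases (ii)--(iii). Inserting the bound $u\le S^*+\ep$ (in case (i), $u\le\ep$) into the $P$-equation gives $P_t-D\Delta P\le(\theta S^*+\theta\ep-\rho)P$ (in case (i), $\le(\theta\ep-\rho)P$); since $\lm_1^D(\rho-\theta S^*)>0$ in (ii)--(iii) and $\lm_1^D(\rho)=\lm_0^D+\rho>0$ always, for $\ep$ small the corresponding linear operator has a positive principal eigenvalue, so comparing $P$ with a suitably large multiple of the associated principal eigenfunction at a fixed time (legitimate by the Hopf lemma, using the $C^1(\boo)$ bound from \qq{2.3}) forces $P\to0$ uniformly. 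Reinserting $\ell P\le\ell\ep$ into the $u$-equation gives $u_t-d\Delta u\ge(a-b-\ell\ep)u-cu^2$, and since $\lm_1^d(b-a+\ell\ep)<0$ for $\ep$ small, a lower comparison gives $\liminf_{t\to\infty}\min_{\boo}(u-\underline S_\ep)\ge0$, where $\underline S_\ep$ is the positive solution of the $\ep$-perturbed logistic equilibrium problem; letting $\ep\to0$ and using continuous dependence of the logistic positive solution on its linear coefficient, $\underline S_\ep\to S^*$. Hence $(u,P)\to(S^*,0)$ uniformly in (ii)--(iii) and $(u,P)\to(0,0)$ in (i).

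For step (b): in case (i), $0\le S,I\le u\to0$, so $(S,I,P)\to(0,0,0)$, which proves (i). In case (ii) with $k\le c$, the bracket in the $I$-equation is $\le-b<0$, so $I\to0$ exponentially; in case (ii) with $k>c$ and $\lm_1^d(b-(k-c)S^*)>0$, once $u\to S^*$ and $P\to0$ one has, for $t$ large, $I_t-d\Delta I\le\big[(k-c)S^*-b+(k-c)\ep\big]I$ with $\lm_1^d(b-(k-c)S^*-(k-c)\ep)>0$, so linear comparison again gives $I\to0$; in both sub-cases $S=u-I\to S^*$, which proves (ii). In case (iii), necessarily $k>c$, and once $u\to S^*$, $P\to0$ the estimates $(k-c)(S^*-\ep)-\ell\ep\le(k-c)u-\ell P\le(k-c)(S^*+\ep)$ give, for $t$ large,
\[
\big[(k-c)S^*-b-\beta_\ep-kI\big]I\ \le\ I_t-d\Delta I\ \le\ \big[(k-c)S^*-b+\beta_\ep-kI\big]I,\qquad \beta_\ep\to0\ (\ep\to0).
\]
Because $\lm_1^d(b-(k-c)S^*)<0$, for $\ep$ small each of the two logistic comparison problems has a unique, globally stable positive equilibrium $\underline I_\ep\le\overline I_\ep$, so $\underline I_\ep\le\liminf_{t\to\infty}I\le\limsup_{t\to\infty}I\le\overline I_\ep$ uniformly; letting $\ep\to0$, continuous dependence gives $\underline I_\ep,\overline I_\ep\to\widehat I$, the unique positive solution of $-d\Delta\widehat I=[(k-c)S^*-b]\widehat I-k\widehat I^2$. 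By Theorem \ref{th4.2} (which gives $\tilde S+\tilde I=S^*$ and that $\tilde I$ solves exactly this logistic problem), $\widehat I=\tilde I$, so $I\to\tilde I$ and $S=u-I\to S^*-\tilde I=\tilde S$, which proves (iii). Step (d) then promotes all these limits to $C^2(\boo)$.

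I expect the genuinely substantive part to be step (a), the prey--predator asymptotics: one must keep exact track of the monotonicity of $\lm_1^d(\cdot)$ and $\lm_1^D(\cdot)$ in their potentials so that every $\ep$-perturbed logistic or linear problem lands in the intended regime, and one needs the continuous dependence of the logistic positive solution on its linear coefficient; both facts are classical but must be invoked with care. The remaining ingredients---the comparison-principle estimates, the Hopf-lemma domination at a fixed time slice, and the $C(\boo)\to C^2(\boo)$ bootstrap from \qq{2.3} plus compactness---are routine.
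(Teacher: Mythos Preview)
Your proposal is correct and follows the same decomposition as the paper: reduce to the $(u,P)$ prey--predator system with $u=S+I$, establish $(u,P)\to(S^*,0)$ (or $(0,0)$), then handle the scalar $I$-equation by logistic comparison, and finally bootstrap to $C^2(\boo)$ via \qq{2.3}. The only technical difference is in how the decay $P\to0$ (and, in case (ii) with $k>c$, $I\to0$) is obtained: the paper multiplies the $P$-equation by the principal eigenfunction $\phi$ of $-D\Delta+(\rho-\theta S^*)$, integrates, and derives a differential inequality for $\int_\oo P\phi\,\dx$, whereas you build an explicit supersolution $Ce^{-\mu t}\phi$ and invoke the Hopf lemma to dominate $P(\cdot,T)$ at a fixed time slice; both are standard, essentially interchangeable devices, and your version has the minor advantage of giving a pointwise exponential rate directly.
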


 \begin{proof} (i) Let $u=S+I$, then we have
  \bes\left\{\begin{array}{ll}
 u_t-d\Delta u=(a-b-cu)u-\ell uP,\;\;&x\in\oo,\;t>0,\\[1mm]
 P_t-D\Delta P=(\theta u-\rho) P,\;\;&x\in\oo,\;t>0,\\[1mm]
 u=P=0, \ &x\in\partial\Omega, \ t>0,\\[1mm]
 u(x,0)=S_0(x)+I_0(x)> 0,\ P(x,0)> 0,\ &x\in\Omega.
\end{array}\right.\label{4.23}
 \ees
Since $\lm^d_0(b-a)\ge 0$, the solution $z$ of the problem
 \bess
 \left\{\begin{array}{lll}
z_t-d\Delta z=(a-b)z-c z^2,\;\; &x\in\Omega, \\[1mm]
 z=0, \ &x\in\partial\Omega,\\[1mm]
 z(x,0)=u(x,0)> 0,\ &x\in\Omega
 \end{array}\right.
 \eess
satisfies $\lim_{t\to\infty} z=0$ in $C^2(\boo)$. And then $\lim_{t\to\infty} u=0$ in $C(\boo)$ by the comparison principle, which implies that $\lim_{t\to\infty} S=\lim_{t\to\infty} I=0$ in $C(\boo)$ as $S, I>0$. From the equation of $P$ we have that $\lim_{t\to\infty} P=0$ in $C(\boo)$. Recalling the estimate \qq{2.3}, it is easy to see that these limits hold in $C^2(\boo)$.\sk

(ii)\, We first consider the problem \qq{4.23}. Rewrite the equation of $P$ as
 \bes
 P_t=D\Delta P-(\rho-\theta S^*)P+\theta(u-S^*) P.
 \lbl{4.24}\ees
Let $\phi$ be the corresponding positive eigenfunction to $\lm_1^D(\rho-\theta S^*)$. Multiplying \qq{4.24} by $\phi$ and integrating by parts we have that, through detailed calculation,
 \bess
 \frac{{\rm d}}{{\rm d}t}\int_\oo P\phi{\rm d}x&=&\int_\oo[\phi D\Delta P-(\rho-\theta S^*)P\phi+\theta(u-S^*) P\phi]\dx\\[.5mm]
 &=&\int_\oo[P D\Delta\phi-(\rho-\theta S^*)P\phi+\theta(u-S^*) P\phi]\dx\\[.5mm]
  &=&\int_\oo\left[-\lm_1^D(\rho-\theta S^*)+\theta(u-S^*)\right]P\phi\dx.
 \eess
As $u$ satisfies $u_t-\Delta u<(a-b-cu)u$, it follows that $\limsup_{t\to\infty}u\le S^*$ uniformly in $\boo$. On account of $-\lm_1^D(\rho-\theta S^*)<0$, there exist $\varepsilon>0$ and $T\gg 1$ such that
 \[\frac{{\rm d}}{{\rm d}t}\int_\oo P\phi{\rm d}x\le -\varepsilon\int_\oo P\phi{\rm d}x,\;\; t\ge T,\]
which implies that
 \[\lim_{t\to\infty}\int_\oo P\phi{\rm d}x=0.\]
On the basis of the estimate \qq{2.3} we still have that
 \bes
 \lim_{t\to\infty} P=0\;\;\;\text{in}\;\;C^2(\boo).
 \lbl{4.25}\ees
Applying the comparison arguments to the first equation of \qq{4.23} and using the estimate \qq{2.3}, we can derive that
\bes
 \lim_{t\to\infty}(S+I)=\lim_{t\to\infty} u=S^*\;\;\;\text{in}\;\;C^2(\boo).
 \lbl{4.26}\ees

For the case $k\le c$. It is clear from the equation of $I$ in \qq{1.2} that $\lim_{t\to\infty} I=0$ in $C(\boo)$, and so in $C^2(\boo)$ by using the estimate \qq{2.3}.
 Therefore, $\lim_{t\to\infty} S=S^*$ in $C^2(\boo)$ by \qq{4.26}.

For the case that $k>c$ and $\lm_1^d(b-(k-c)S^*)>0$.
Similar to the above, we write the second equation of \qq{1.2} as
\bes
 I_t&=&d\Delta I+[(k-c)(S-S^*)-cI-\gamma P]I-[b-(k-c)S^*]I\nonumber\\
 &\le&d\Delta I-[b-(k-c)S^*]I+(k-c)(S-S^*)I.
 \lbl{4.27}\ees
Let $\varphi$ be the corresponding positive eigenfunction to $\lm_1^d(b-(k-c)S^*)$. Multiplying \qq{4.27} by $\phi$ and integrating by parts we have
 \bess
 \frac{{\rm d}}{{\rm d}t}\int_\oo I\varphi{\rm d}x&\le&
 \int_\oo\big\{\varphi d\Delta I-[b-(k-c)S^*]I\varphi\big\}\dx+\int_\oo(k-c)(S-S^*)I\varphi\dx\\
  &=&\int_\oo I\big \{d\Delta\varphi-[b-(k-c)S^*]\varphi\big\}\dx+\int_\oo(k-c)(S-S^*)I\varphi\dx\\
 &=&\int_\oo[-\lm_1^d(b-(k-c)S^*)+(k-c)(S-S^*)]I\varphi\dx.
 \eess
We have known that $\limsup_{t\to\infty}S\le S^*$ uniformly in $\boo$.
Similar to the above we can derive that $\lim_{t\to\infty} I=0$ in $C^2(\boo)$. So $\lim_{t\to\infty} S=S^*$ in $C^2(\boo)$ by \qq{4.26}.\sk

(iii) We first consider the problem \qq{4.23}. In part (ii) we have obtained \qq{4.25} and \qq{4.26}. Rewriting the equation of $I$ as
 \bess\left\{\begin{array}{ll}
 I_t=d\Delta I+[(k-c)S^*-b+(k+c)(u-S^*)-\ell P]I-kI^2,\;\;&x\in\oo,\;t>0,\\[1mm]
 I=0, \ &x\in\partial\Omega, \ t>0,\\[1mm]
 I(x,0)=S_0(x)+I(x,0)> 0.\ &x\in\Omega.
\end{array}\right.
 \eess

Since $\lm^d_0(b-a)<0$ and $\lm_1^d(b-(k-c)S^*)<0$, by Theorem \ref{th4.2} we know that \qq{1.5} has a unique positive solution $(\tilde S, \tilde I)$, and $\tilde I$ satisfies
\bess\left\{\begin{array}{ll}
 -d\Delta \tilde I=[(k-c)S^*-b]\tilde I-k\tilde I^2,\;\;&x\in\oo,\\[1mm]
 \tilde I=0, \ &x\in\partial\Omega.
\end{array}\right.
 \eess
Recall that
 \bess
 \lim_{t\to\infty}[(k+c)(u-S^*)-\ell P]=0\;\;\;\text{in}\;\;C^2(\boo).
  \eess
Make use of the comparison arguments it can be shown that $\lim_{t\to\infty} I=\tilde I$ in $C^2(\boo)$. In turn, $\lim_{t\to\infty} S=S^*-\tilde I=\tilde S$ in $C^2(\boo)$ by \qq{4.26} and Theorem \ref{th4.2}.
\end{proof}

\vskip 4pt \noindent {\bf Acknowledgment:} The author would like to thank Professors Y. H. Du and Y. Lou for their suggestions on the uniqueness of positive solutions of \qq{1.6}, and provided me the reference \cite{LP}. The author would also like to thank Dr. H. M. Huang, who helped me check and revise this manuscript many times.

\end{document}